 \newtheorem{thm}{Theorem}[section]
 \newtheorem{cor}[thm]{Corollary}
 \newtheorem{lem}[thm]{Lemma}
 \theoremstyle{definition}
 \theoremstyle{remark}
 \newtheorem*{ex}{Example}
 \numberwithin{equation}{section}
\begin{document}
\title[Semisimple Hopf algebras]{Structure of semisimple Hopf algebras of dimension $p^2q^2$}

\author[J. Dong]{Jingcheng Dong}
\address{College of Engineering, Nanjing Agricultural University, Nanjing
210031, Jiangsu, People's Republic of China; Department of
Mathematics, Yangzhou University, Yangzhou 225002, Jiangsu, People's
Republic of China}

\email{dongjc@njau.edu.cn}

\subjclass[2000]{16W30}

\keywords{semisimple Hopf algebra, semisolvability, character,
biproduct}

\begin{abstract}
Let $p,q$ be prime numbers with $p^4<q$, and $k$ an algebraically
closed field of characteristic $0$. We show that semisimple Hopf
algebras of dimension $p^2q^2$ can be constructed either from group
algebras and their duals by means of extensions, or from Radford
biproduct $R\# kG$, where $kG$ is the group algebra of group $G$ of
order $p^2$, $R$ is a semisimple Yetter-Drinfeld Hopf algebra in
${}^{kG}_{kG}\mathcal{YD}$ of dimension $q^2$. As an application,
the special case that the structure of semisimple Hopf algebras of
dimension $4q^2$ is given.
\end{abstract}
\maketitle



\section{Introduction}\label{sec1}
Throughout this paper, we will work over an algebraically closed
field $k$ of characteristic $0$.

The problem of classifying all Hopf algebras of dimension $d$, where
$d$ factorizes in a simple way, attracts many mathematicians'
interest. It is also a question posed by Andruskiewitsch
\cite[Question 6.2]{Andruskiewitsch}. As a pioneer, Zhu \cite{Zhu}
proved that a Hopf algebra of prime dimension over $k$ is a group
algebra. Several years later, a series of papers
\cite{Etingof,Gelaki,Masuoka,Masuoka2} proved that semisimple Hopf
algebras of dimension $p^2$ or $pq$ over $k$ are trivial, where
$p,q$ are distinct prime numbers. That is, they are isomorphic to a
group algebra or to a dual group algebra. Quite recently, Etingof et
al \cite{Etingof2} completed the classification of semisimple Hopf
algebras of dimension $pq^2$ and $pqr$, where $p,q,r$ are distinct
prime numbers. The results in \cite{Etingof2} showed that all these
Hopf algebras can be constructed from group algebras and their duals
by means of extensions.

In this paper, we study the structure of semisimple Hopf algebras of
dimension $p^2q^2$, where $p,q$ are prime numbers with $p^4<q$. As
an application, we also study the structure of semisimple Hopf
algebras of dimension $4q^2$, where $q$ is a prime number.

The paper is organized as follows. In Section \ref{sec2}, we recall
the definitions and basic properties of semisolvability, characters
and Radford's biproducts, respectively.

In Section \ref{sec3}, we study the structure of semisimple Hopf
algebras of dimension $p^2q^2$, where $p,q$ are prime numbers with
$p^4<q$. By checking the order of $G(H^*)$, we prove that if
$|G(H^*)|=p,pq,q^2$ or $pq^2$ then $H$ is not simple as a Hopf
algebra and is semisolvable, in the sense of \cite{Montgomery}; if
$|G(H^*)|=p^2$ or $p^2q$ then $H$ is either semisolvable or
isomorphic to a Radford's biproduct $R\# kG$, where $kG$ is the
group algebra of group $G$ of order $p^2$, $R$ is a semisimple
Yetter-Drinfeld Hopf algebra in ${}^{kG}_{kG}\mathcal{YD}$ of
dimension $q^2$. The possibility that $|G(H^*)|=1$ and $q$ can be
discarded. In particular, we prove that if $p$ does not divide $q-1$
and $q+1$, then $H$ is necessarily semisolvable.

In Section \ref{sec4}, we study the structure of semisimple Hopf
algebras of dimension $4q^2$, where $q$ is a prime number. In view
of the results in Section \ref{sec3}, we discuss the cases that
$q=3,5,7,11$ and $13$.

Throughout this paper, all modules and comodules are left modules
and left comodules, and moreover they are finite-dimensional over
$k$. $\otimes$, ${\rm dim}$ mean $\otimes _k$, ${\rm dim}_k$,
respectively. For two positive integers $m$ and $n$, $gcd(m,n)$
denotes the greatest common divisor of $m,n$. Our references for the
theory of Hopf algebras are \cite{Montgomery2} or \cite{Sweedler}.
The notation for Hopf algebras is standard. For example, the group
of group-like elements in $H$ is denoted by $G(H)$.

\section{Preliminaries}\label{sec2}
\subsection{Semisolvability}\label{sec2-1}
Let $H$ be a finite-dimensional Hopf algebra over $k$. A Hopf
subalgebra $A\subseteq H$ is called normal if $h_1AS(h_2)\subseteq
A$ and $S(h_1)Ah_2\subseteq A$, for all $h\in H$. If $H$ does not
contain proper normal Hopf subalgebras then it is called simple. The
notion of simplicity is self-dual, that is, $H$ is simple if and
only if $H^*$ is simple.

Let $q:H\to B$ be a Hopf algebra map and consider the subspaces of
coinvariants
$$H^{coq}=\{h\in H|(id\otimes q)\Delta(h)=h\otimes 1\}, \mbox{and\,}$$
$$^{coq}\!H=\{h\in H|(q\otimes id)\Delta(h)=1\otimes h\}.$$
Then $H^{coq}$ (respectively, $^{coq}H$) is a left (respectively,
right) coideal subalgebra of $H$. Moreover, we have
$${\rm dim}H ={\rm dim}H^{coq}{\rm dim}q(H) ={\rm dim}{}^{coq}H{\rm dim}q(H).$$

The left coideal subalgebra $H^{coq}$ is stable under the left
adjoint action of $H$. Moreover $H^{coq} ={}^{coq}H$ if and only if
$H^{coq}$ is a (normal) Hopf subalgebra of $H$. If this is the case,
we shall say that the map $q:H\to B$ is normal. See \cite{Schneider}
for more details.

The following lemma comes from \cite[Section 1.3]{Natale4}.
\begin{lem}\label{lem5}
Let $q:H\to B$ be a Hopf epimorphism and $A$ a Hopf subalgebra of
$H$ such that $A\subseteq H^{coq}$. Then ${\rm dim}A$ divides ${\rm
dim}H^{coq}$.
\end{lem}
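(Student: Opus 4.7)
The plan is to establish the divisibility by exhibiting $H^{coq}$ as a free right $A$-module. Once that is known, one has $\dim H^{coq} = r \dim A$ for some positive integer $r$, and the conclusion is immediate.

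First I would verify that $H^{coq}$ is stable under right multiplication by $A$, so that it carries the structure of a right $A$-module. If $h \in H^{coq}$ and $a \in A \subseteq H^{coq}$, then in $H \otimes B$ one has the identities $\sum h_1 \otimes q(h_2) = h \otimes 1$ and $\sum a_1 \otimes q(a_2) = a \otimes 1$; multiplying these in the algebra $H \otimes B$ yields
\[
(\mathrm{id} \otimes q)\Delta(ha) = \sum h_1 a_1 \otimes q(h_2)q(a_2) = (h \otimes 1)(a \otimes 1) = ha \otimes 1,
\]
so $ha \in H^{coq}$. Thus $H^{coq}$ is an $A$-module subalgebra of $H$ containing $A$.

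The central ingredient is then a freeness theorem. The Nichols--Zoeller theorem says $H$ is free as a right $A$-module, and a strengthening due essentially to Skryabin (and closely related to results of Schneider and Masuoka on Hopf--Galois extensions) asserts that any one-sided coideal subalgebra of a finite-dimensional Hopf algebra $H$ containing a Hopf subalgebra $A$ is itself free as an $A$-module. Applied to the inclusion $A \subseteq H^{coq}$, this gives the required freeness and hence the divisibility.

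The hard part is this second step. Nichols--Zoeller alone is not enough, since $H^{coq}$ sits inside the free right $A$-module $H$ merely as a right $A$-submodule, and submodules of free modules need not be free over arbitrary rings. What rescues the argument is the additional coideal-subalgebra structure on $H^{coq}$, which places the pair $A \subseteq H^{coq}$ within the scope of the Skryabin-type freeness theorem for finite-dimensional Hopf algebras.
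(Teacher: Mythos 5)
Your argument is correct in substance, and it follows the same route as the source the paper itself relies on: the paper gives no proof of this lemma, only a citation to Section 1.3 of Natale's memoir, where the statement is obtained exactly by showing $H^{coq}$ is free over $A$. Two remarks on your key step, though. First, your closing claim that ``Nichols--Zoeller alone is not enough'' rests on reading that theorem as merely the assertion that $H$ is free over $A$; the theorem as actually proved by Nichols and Zoeller is the stronger statement that \emph{every} relative $(H,A)$-Hopf module is free over $A$. Since $H^{coq}$ is a left coideal of $H$ (so $\Delta(H^{coq})\subseteq H\otimes H^{coq}$) and, as you verified, a right $A$-module under multiplication, with the compatibility $\Delta(ka)=\Delta(k)\Delta(a)$ making it an $(H,A)$-Hopf module, the freeness you need is an immediate instance of Nichols--Zoeller itself --- no Skryabin-type theorem is required. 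Second, the result you attribute to Skryabin is in any case a different (and harder) statement, namely that $H$ is free over each of its coideal subalgebras; what you need is that the coideal subalgebra is free over a Hopf subalgebra it contains, which is the elementary relative-Hopf-module fact above. So your proof is valid as written (the theorem you quote is true), but the attribution is off and the ``hard part'' you flag is actually the easy part once Nichols--Zoeller is stated in its full generality.
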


The notions of upper and lower semisolvability for
finite-dimensional Hopf algebras have been introduced in
\cite{Montgomery}, as generalizations of the notion of solvability
for finite groups. By definition, $H$ is called lower semisolvable
if there exists a chain of Hopf subalgebras
$$H_{n+1} = k\subseteq
H_{n}\subseteq\cdots \subseteq H_1 = H$$ such that $H_{i+1}$ is a
normal Hopf subalgebra of $H_i$, for all $i$, and all quotients
$H_{i}/H_{i}H^+_{i+1}$ are trivial. Dually, $H$ is called upper
semisolvable if there exists a chain of quotient Hopf algebras
$$H_{(0)} =
H\xrightarrow{\pi_1}H_{(1)}\xrightarrow{\pi_2}\cdots\xrightarrow{\pi_n}H(n)
= k$$ such that each of the maps $H_{(i-1)}\xrightarrow{\pi_i}
H_{(i)}$ is normal, and all $H_{(i-1)}^{co\pi_i}$ are trivial.

By \cite[Corollary 3.3]{Montgomery}, we have that $H$ is upper
semisolvable if and only if $H^*$ is lower semisolvable. If this is
the case, then $H$ can be obtained from group algebras and their
duals by means of (a finite number of) extensions. For the
definition of the extension of Hopf algebras, the reader is directed
to \cite[Definition 1.3]{Masuoka3}.

Recall that a semisimple Hopf algebra $H$ is called of Frobenius
type if the dimensions of the simple $H$-modules divide the
dimension of $H$. Kaplansky conjectured that every
finite-dimensional semisimple Hopf algebra is of Frobenius type
\cite[Appendix 2]{Kaplansky}. It is still an open problem. Recently,
many examples show that a positive answer to Kaplansky's conjecture
would be very helpful in the classification problem. For example, in
case that ${\rm dim}H$ is a product of two distinct prime numbers,
Gelaki and Westreich \cite{Gelaki} proved that if $H$ and $H^*$ are
of Frobenius type then $H$ is trivial.

The following result is not explicitly stated in \cite{Etingof2}. We
give a proof for completeness.
\begin{lem}\label{lem1}
Let $H$ be a semisimple Hopf algebra of dimension $p^mq^n$, where
$p,q$ are distinct prime numbers and $m,n$ are non-negative integer.
Then $H$ is of Frobenius type and $H$ has a non-trivial
$1$-dimensional representation.
\end{lem}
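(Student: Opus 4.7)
The plan is to derive both conclusions from the categorical Burnside-type theorem used in \cite{Etingof2}: the representation category of a semisimple Hopf algebra whose dimension has only two distinct prime divisors is a solvable fusion category. Since $H$ is semisimple in characteristic zero, its antipode is involutive and $\mathrm{Rep}(H)$ is a fusion category with Frobenius--Perron dimension $p^mq^n$ in which the FP-dimension of a simple object coincides with its $k$-dimension; so it is enough to translate back to simple $H$-modules at the end.

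For the Frobenius property, I would invoke solvability to get a composition series of $\mathrm{Rep}(H)$ whose successive quotients are categories of representations of cyclic groups of prime order, with each such prime dividing $p^mq^n$, hence equal to $p$ or $q$. An induction on the length of this series then shows that the FP-dimension of every simple object is a product of primes taken from $\{p,q\}$, hence an integer dividing $p^mq^n$. Passing back from FP-dimensions to ordinary dimensions gives the Frobenius property for $H$.

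For the existence of a nontrivial one-dimensional representation, I would argue that a nontrivial solvable fusion category necessarily contains a nontrivial pointed subcategory: the first term of a composition series is pointed of order a prime dividing $p^mq^n$. Interpreting this subcategory of invertible objects as $\mathrm{Rep}$ of a nontrivial quotient group of $G(H^*)$, one obtains a nontrivial character of $H$, i.e.\ a nontrivial element of $G(H^*)$.

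The main obstacle is essentially a bibliographic one: pinning down the solvability statement in the precise form needed from \cite{Etingof2} (or the solvable-fusion-category framework used there), since the author explicitly notes that the lemma is not stated there. Once that reference is in hand, both conclusions are immediate from standard structural features of solvable fusion categories, and the boundary cases $m=0$ or $n=0$ create no difficulty: they are either special cases of the same argument or follow from earlier results on Hopf algebras of prime-power dimension.
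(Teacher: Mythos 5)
Your proposal takes essentially the same route as the paper's proof: both rest on \cite[Theorem 1.6]{Etingof2}, which gives that $Rep(H)$ is a solvable (hence weakly group-theoretical) fusion category, and then deduce the two conclusions from structural properties of such categories. The two ``standard structural features'' you were trying to re-derive are stated explicitly in that reference --- the strong Frobenius property for weakly group-theoretical categories is \cite[Theorem 1.5]{Etingof2}, and the existence of a nontrivial invertible object in a nontrivial solvable fusion category is \cite[Proposition 9.9]{Etingof2} --- so the paper simply cites them, which is preferable to your ad hoc composition-series sketches (as written these are shaky: the first term of the defining series need not be a subcategory of $Rep(H)$ when the relevant step is an equivariantization, and in an extension step the simples of the nontrivial graded components do not obviously have integer dimension).
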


The proof of Lemma \ref{lem1} involves some definitions and
properties from fusion categories. We refer the reader to
\cite{Etingof2} and references therein for basic results on fusion
category.

\begin{proof}
Let $Rep(H)$ be the category of representations of $H$. By
\cite[Theorem 1.6]{Etingof2}, $Rep(H)$ is a solvable fusion
category. Comparing \cite[Definition 1.1]{Etingof2} with
\cite[Definition 1.2]{Etingof2}, we find out that $Rep(H)$ is also
weakly group-theoretical. The first statement then follows from
\cite[Theorem 1.5]{Etingof2}. The second statement directly follows
from \cite[Proposition 9.9]{Etingof2}.
\end{proof}

\begin{lem}\label{lem2}
Let $H$ be a semisimple Hopf algebra of dimension $p^2q^2$, where
$p<q$ are prime numbers. If $H$ has a Hopf subalgebra $K$ of
dimension $pq^2$ then $H$ is lower semisolvable.
\end{lem}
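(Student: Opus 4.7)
The plan is to show that $K$ is a normal Hopf subalgebra of $H$ and then stack its known normal series to obtain one for $H$. The normality step is the crux, while the existence of a normal series for $K$ follows readily from the classification of semisimple Hopf algebras of dimension $pq^2$.

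\textbf{Input from the classification of $K$.} By Lemma~\ref{lem1}, $K$ is of Frobenius type and admits a nontrivial one-dimensional representation. Combined with the main result of \cite{Etingof2}, $K$ is obtained from group algebras and their duals by iterated extensions, so $K$ is lower semisolvable, admitting a chain $k = L_{m+1} \subseteq L_m \subseteq \cdots \subseteq L_1 = K$ with each $L_{i+1}$ a normal Hopf subalgebra of $L_i$ and each consecutive quotient $L_i/L_iL_{i+1}^+$ trivial.

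\textbf{Normality of $K$ in $H$.} Here $[H:K] = p$ is the smallest prime dividing $\dim H = p^2q^2$, so I am seeking a Hopf-algebraic analogue of the elementary fact that a subgroup of smallest prime index is normal. By the Nichols--Zoeller theorem, $H$ is a free right $K$-module of rank $p$, and $C := H/HK^+$ is a left $H$-module coalgebra of dimension $p$. The plan is to show $HK^+ = K^+H$ by analyzing the action of $H$ on $C$, using the Frobenius-type property from Lemma~\ref{lem1} (which restricts the dimensions of simple constituents of $C$) together with the existence of a nontrivial one-dimensional representation of $H$. This equality promotes $C$ to a Hopf-algebra quotient of $H$ of prime dimension $p$, which is then automatically a group algebra, with $K$ as its Hopf kernel.

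\textbf{Assembly.} With $K$ normal in $H$ and trivial quotient $H/HK^+$, appending $K \subseteq H$ to the chain above yields
\[
k = L_{m+1} \subseteq L_m \subseteq \cdots \subseteq L_1 = K \subseteq H,
\]
a lower normal series for $H$ with all consecutive quotients trivial, establishing that $H$ is lower semisolvable. The main obstacle is the normality step: the ``smallest prime index implies normal'' principle is not automatic for Hopf algebras, and pinning it down in this dimension range is where the substance of the argument lies.
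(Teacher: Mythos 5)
Your overall architecture matches the paper's: show $K$ is normal in $H$, observe that the quotient $H/HK^{+}$ has prime dimension $p$ and is therefore a group algebra by Zhu's theorem, and splice a lower normal series for $K$ onto $K\subseteq H$. But the step you yourself identify as the crux --- normality of $K$ in $H$ --- is never actually carried out. You state a ``plan'' to prove $HK^{+}=K^{+}H$ by analyzing the $H$-action on $C=H/HK^{+}$ using the Frobenius-type property and a nontrivial one-dimensional representation, and then close by saying that pinning this down ``is where the substance of the argument lies.'' That is an acknowledgment of a gap, not a proof. The paper closes this gap by citing the theorem of Kobayashi and Masuoka \cite{Kobayashi}, which states exactly that a Hopf subalgebra whose index is the smallest prime dividing ${\rm dim}\,H$ is normal; their argument is a Nichols--Zoeller freeness/counting argument and does not in fact route through Frobenius type or one-dimensional representations, so your proposed toolkit is not obviously the right one even as a sketch.

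A second, smaller defect is the deduction that $K$ is lower semisolvable. From Lemma \ref{lem1} and \cite{Etingof2} you conclude that $K$ is ``obtained from group algebras and their duals by iterated extensions'' and immediately infer lower semisolvability. But that phrasing only guarantees semisolvability in one of the two dual senses: if $K$ were merely \emph{upper} semisolvable you would obtain a chain of normal quotient maps, not the chain of normal Hopf \emph{subalgebras} $k=L_{m+1}\subseteq\cdots\subseteq L_1=K$ that your assembly step requires. The paper sidesteps this by invoking \cite[Theorem 5.4.1]{Natale3} (together with Lemma \ref{lem1} applied to $K$ and $K^{*}$) to produce a concrete proper normal Hopf subalgebra $L\subseteq K$ of dimension $p$, $q$, $pq$ or $q^{2}$; then $L$ and $K/KL^{+}$ are both trivial by the classification results in dimensions $p$, $p^{2}$ and $pq$, and $k\subseteq L\subseteq K\subseteq H$ is an explicit lower normal series. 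You should either justify that the extension structure from \cite{Etingof2} really is of the lower type, or replace that step with an argument producing a normal Hopf subalgebra of $K$ directly.
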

\begin{proof}
Since the index of $K$ in $H$ is $p$ which is the smallest prime
number dividing ${\rm dim}H$, the result in \cite{Kobayashi} shows
that $K$ is a normal Hopf algebra of $H$. Since the dimension of the
quotient $H/HK^+$ is $p$, the result in \cite{Zhu} shows that it is
trivial.

Since $K^*$ is also a semisimple Hopf algebra (see \cite{Larson}),
Lemma \ref{lem1} and \cite[Theorem 5.4.1]{Natale3} show that $K$ has
a proper normal Hopf subalgebra $L$ of dimension $p,q,pq$ or $q^2$.
The results in \cite{Etingof,Gelaki,Masuoka,Masuoka2} (mentioned in
Section \ref{sec1}) show that $L$ and $K/KL^+$ are both trivial.
Hence, we have a chain of Hopf subalgebras $k\subseteq L\subseteq
K\subseteq H$, which satisfies the definition of lower
semisolvability.
\end{proof}

\subsection{Characters}\label{sec2-2}
Throughout this section, $H$ will be a semisimple Hopf algebra over
$k$.

Let $V$ be an $H$-module. The character of $V$ is the element
$\chi=\chi_V\in H^*$ defined by $\langle\chi,h\rangle={\rm Tr}_V(h)$
for all $h\in H$. The degree of $\chi$ is defined to be the integer
${\rm deg}\chi=\chi(1)={\rm dim}V$. We shall use $X_t$ to denote the
set of all irreducible characters of $H$ of degree $t$. If $U$ is
another $H$-module, we have $$\chi_{U\otimes
V}=\chi_U\chi_V,\quad\chi_{V^*}=S(\chi_V),$$ where $S$ is the
antipode of $H^*$.

Hence, the irreducible characters, namely, the characters of the
simple $H$-modules, span a subalgebra $R(H)$ of $H^*$, which is
called the character algebra of $H$. By \cite[Lemma 2]{Zhu}, $R(H)$
is semisimple. The antipode $S$ induces an anti-algebra involution
$*: R(H)\to R(H)$, given by $\chi\to\chi^*:=S(\chi)$. The character
of the trivial $H$-module is the counit $\varepsilon$.

The properties of $R(H)$ have been intensively studied in
\cite{Nichols}. We recall some of them here, and will use them
freely in this paper. See also \cite[Section 1.2]{Natale4}.

Let $\chi_U,\chi_V\in R(H)$ be the characters of the $H$-modules $U$
and $V$, respectively. The integer $m(\chi_U,\chi_V)={\rm
dimHom}_H(U,V)$ is defined to the the multiplicity of $U$ in $V$.
This can be extended to a bilinear form $m:R(H)\times R(H)\to k$.

Let $\widehat{H}$ denote the set of irreducible characters of $H$.
Then $\widehat{H}$ is a basis of $R(H)$. If $\chi\in R(H)$, we may
write $\chi=\sum_{\alpha\in \widehat{H}}m(\alpha,\chi)\alpha$. Let
$\chi,\psi,\omega\in R(H)$. Then
$m(\chi,\psi\omega)=m(\psi^*,\omega\chi^*)=m(\psi,\chi\omega^*)$ and
$m(\chi,\psi)=m(\chi^*,\psi^*)$. See \cite[Theorem 9]{Nichols}.

For each group-like element $g$ in  $G(H^*)$, we have
$m(g,\chi\psi)=1$, if $\psi=\chi^*g$ and $0$ otherwise for all
$\chi,\psi\in \widehat{H}$. In particular, $m(g,\chi\psi)=0$ if
$deg(\chi)\neq deg(\psi)$. Let $\chi\in \widehat{H}$. Then for any
group-like element $g$ in $G(H^*)$, $m(g,\chi\chi^{*})>0$ if and
only if $m(g,\chi\chi^{*})= 1$ if and only if $g\chi=\chi$. The set
of such group-like elements forms a subgroup of $G(H^*)$, of order
at most $({\rm deg}(\chi))^2$.  See \cite[Theorem 10]{Nichols}.
Denote this subgroup by $G[\chi]$. In particular, we have
$$\chi\chi^*=\sum_{g\in G[\chi]}g+\sum_{\alpha\in \widehat{H},{\rm
deg}\alpha>1}m(\alpha,\chi\chi^*)\alpha.$$

The following result can be found in \cite[Lemma 2.2.2]{Natale1}.
\begin{lem}\label{lem3}
Let $\chi\in \widehat{H}$ be an irreducible character of $H$. Then

 (1)\,The order of $G[\chi]$ divides $({\rm deg}\chi)^2$.

 (2)\,The order of $G(H^*)$ divides $n({\rm deg}\chi)^2$, where $n$ is the
 number of non-isomorphic simple $H$-modules of dimension ${\rm deg}\chi$.
\end{lem}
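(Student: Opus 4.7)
The plan is to prove (1) first, then deduce (2) by a short orbit count; essentially all the work is in (1).

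For (1), I would work inside ${\rm End}(V)\cong V\otimes V^*$, where $V$ is the simple $H$-module with character $\chi$, equipped with the adjoint $H$-action making it an $H$-module algebra. By the decomposition of $\chi\chi^*$ already recorded, for every $g\in G[\chi]$ the $g$-isotypic component of ${\rm End}(V)$ is one-dimensional; fix a generator $\phi_g$ in it. Since ${\rm End}(V)$ is an $H$-module algebra and $gg'\in G[\chi]$ whenever $g,g'\in G[\chi]$, the product of a $g$-eigenvector and a $g'$-eigenvector lies in the one-dimensional $(gg')$-isotypic component, so $\phi_g\phi_{g'}=c(g,g')\phi_{gg'}$ for scalars $c(g,g')$. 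Each $\phi_g$ corresponds to a nonzero intertwiner between the simple modules $V\otimes kg$ and $V$, hence is invertible in ${\rm End}(V)$, which forces $c(g,g')\in k^\times$. The subspace $W:=\bigoplus_{g\in G[\chi]}k\phi_g$ is then a twisted group algebra $k^c[G[\chi]]$ of dimension $|G[\chi]|$ embedded as a unital semisimple subalgebra of $M_{{\rm deg}\chi}(k)={\rm End}(V)$. Analyzing its Artin--Wedderburn decomposition together with the $H$-equivariance constraints (the $H$-action is diagonal in the $\phi_g$-basis) then yields the divisibility $|G[\chi]|\mid({\rm deg}\chi)^2$.

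For (2), consider the $G(H^*)$-action by left multiplication on the set $\mathcal{S}$ of isomorphism classes of simple $H$-modules of dimension ${\rm deg}\chi$. By definition the stabilizer of any $\psi\in\mathcal{S}$ is $G[\psi]$. Choosing orbit representatives $\psi_1,\ldots,\psi_s$, orbit-stabilizer gives
$$n=|\mathcal{S}|=\sum_{i=1}^{s}\frac{|G(H^*)|}{|G[\psi_i]|},$$
hence
$$\frac{n({\rm deg}\chi)^2}{|G(H^*)|}=\sum_{i=1}^{s}\frac{({\rm deg}\chi)^2}{|G[\psi_i]|}.$$
Since ${\rm deg}\psi_i={\rm deg}\chi$ for each $i$, part (1) applied to $\psi_i$ makes every summand on the right a positive integer, so $|G(H^*)|$ divides $n({\rm deg}\chi)^2$.

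The main obstacle is (1): the upper bound $|G[\chi]|\le({\rm deg}\chi)^2$ is visible by dimension count directly from the character decomposition, but upgrading it to divisibility requires genuinely algebraic input, namely recognising the grouplike isotypic components of ${\rm End}(V)$ as the twisted group algebra $W$ and exploiting its embedding into $M_{{\rm deg}\chi}(k)$. Once (1) is in hand, (2) is immediate from the orbit identity displayed above.
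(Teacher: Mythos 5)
Your part (2) is fine: the orbit--stabilizer count is exactly how one deduces (2) from (1), and it needs nothing beyond the fact (recorded in Section 2.2) that the stabilizer of $\psi$ under left multiplication by $G(H^*)$ is $G[\psi]$. The problem is part (1), where your argument has a genuine gap at the decisive step. Everything up to the construction of $W=\bigoplus_{g\in G[\chi]}k\phi_g$ is correct: the $g$-isotypic components are one-dimensional, each $\phi_g$ is invertible, and $W$ is a twisted group algebra $k^c[G[\chi]]$ sitting unitally inside ${\rm End}(V)\cong M_{{\rm deg}\chi}(k)$. But the conclusion you draw from this---that $|G[\chi]|=\dim W$ must divide $({\rm deg}\chi)^2=\dim{\rm End}(V)$---does not follow from ``analyzing the Artin--Wedderburn decomposition'': a unital semisimple subalgebra of $M_d(k)$ need not have dimension dividing $d^2$. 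For instance $k\times k$ embeds unitally in $M_3(k)$ as ${\rm diag}(a,a,b)$, and $k\times k$ is the (only) twisted group algebra of $\mathbb{Z}/2$, so even knowing that $W$ is a twisted group algebra faithfully represented on $V$ does not give the divisibility. The ``$H$-equivariance constraints'' you invoke at this point are never identified, and I do not see how to extract the conclusion from the picture inside ${\rm End}(V)$ alone.

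The paper itself gives no proof but cites \cite[Lemma 2.2.2]{Natale1}, and the proof there uses precisely the input your argument is missing: the Nichols--Zoeller freeness theorem. One lets $C\subseteq H^*$ be the simple subcoalgebra containing $\chi$, so that $\dim C=({\rm deg}\chi)^2$ and $gC=C$ exactly for $g\in G[\chi]$. Then $C$ is simultaneously a left $kG[\chi]$-module under multiplication in $H^*$ and a left $H^*$-comodule via $\Delta$, i.e.\ a relative Hopf module, and the Nichols--Zoeller theorem forces $C$ to be \emph{free} over $kG[\chi]$; it is this freeness, not the mere existence of the module structure, that yields $|G[\chi]|\mid\dim C$. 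Your algebra $W$ is essentially the dual picture of $kG[\chi]$ acting on $C$, so the repair is to replace the Wedderburn analysis by this freeness argument (part (2) can then stay as you wrote it, or be obtained by applying the same freeness to the sum of all simple subcoalgebras of dimension $({\rm deg}\chi)^2$, which is stable under left multiplication by all of $G(H^*)$).
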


Let $1=d_1,d_2,\cdots,d_s$, $n_1,n_2,\cdots,n_s$ be positive
integers, with $d_1<d_2<\cdots<d_s$. $H$ is said to be of type
$(d_1,n_1;\cdots;d_s,n_s)$ as an algebra if $d_1,d_2,\cdots,d_s$ are
the dimensions of the simple $H$-modules and  $n_i$ is the number of
the non-isomorphic simple $H$-modules of dimension $d_i$. That is,
as an algebra,  $H$ is isomorphic to a direct product of full matrix
algebras $$H\cong k^{(n_1)}\times
\prod_{i=2}^{s}M_{d_i}(k)^{(n_i)}.$$

If $H^*$ is of type $(d_1,n_1;\cdots;d_s,n_s)$ as an algebra, then
$H$ is said to be of type $(d_1,n_1;\cdots;d_s,n_s)$ as a coalgebra.

A subalgebra $A$ of $R(H)$ is called a standard subalgebra if $A$ is
spanned by irreducible characters of $H$. Let $X$ be a subset of
$\widehat{H}$. Then $X$ spans a standard subalgebra of $R(H)$ if and
only if the product of characters in $X$ decomposes as a sum of
characters in $X$. There is a bijection between $*$-invariant
standard subalgebras of $R(H)$ and quotient Hopf algebras of $H$.
See \cite[Theorem 6]{Nichols}.

\begin{lem}\label{lem4}
Let $G$ be a non-trivial subgroup of $G(H^*)$. If $G[\chi_t]=G$ for
every $\chi_t\in X_t$, then $\chi_t\chi_t'$ is not irreducible for
all $\chi_t,\chi_t'\in X_t$.
\end{lem}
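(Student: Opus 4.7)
The plan is to suppose, for contradiction, that $\eta := \chi_t\chi_t'$ is irreducible (necessarily of degree $t^2$) and to derive $|G|\leq 1$ by evaluating the coefficient of a group-like element $g\in G$ in $\eta\eta^*$ in two different ways.

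First I would promote the left-stabilizer hypothesis to a two-sided one. Since $\chi_t^*\in X_t$ whenever $\chi_t\in X_t$, the assumption forces $g\chi_t^*=\chi_t^*$ for every $g\in G$. Dualising and using that $G$ is a subgroup yields $\chi_t g = \chi_t$ for every $g\in G$. In particular $\chi_t g\chi_t^* = \chi_t\chi_t^*$ for every $g\in G$, and $g\eta = (g\chi_t)\chi_t' = \eta$, so $G\subseteq G[\eta]$.

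On one hand, since $\eta$ is irreducible, the decomposition of $\eta\eta^*$ displayed in Section \ref{sec2-2} shows that the coefficient of every $g\in G[\eta]$ equals exactly $1$. On the other hand, write $\chi_t'(\chi_t')^* = \sum_{g\in G}g + Z$, where $Z$ is a sum of irreducibles of degree $>1$ with non-negative coefficients. Using the right-invariance established above,
$$\eta\eta^* \;=\; \chi_t\bigl(\chi_t'(\chi_t')^*\bigr)\chi_t^* \;=\; |G|\,\chi_t\chi_t^* \;+\; \chi_t\, Z\, \chi_t^*.$$
Since $G[\chi_t]=G$, the coefficient of every $g\in G$ in $\chi_t\chi_t^*$ equals $1$; and the coefficient of $g$ in $\chi_t Z\chi_t^*$ is non-negative, since it is a character. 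Hence the coefficient of $g\in G$ in $\eta\eta^*$ is at least $|G|$. Comparing with the value $1$ forced by the irreducibility of $\eta$ gives $|G|\leq 1$, contradicting the non-triviality of $G$.

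The only really substantive point in the argument is the passage from the left-stabilizer hypothesis to the right-stabilizer identity $\chi_t g = \chi_t$; without it the middle sum in the display would not collapse to the clean scalar multiple $|G|\,\chi_t\chi_t^*$, and the coefficient comparison would break down. Once this invariance is in place, the rest is immediate from the character-algebra facts listed in Section \ref{sec2-2}.
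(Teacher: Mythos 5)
Your proof is correct. Every step checks out against the character-theoretic facts recorded in Section \ref{sec2-2}: the passage from $G[\chi_t^*]=G$ to the right-invariance $\chi_t g=\chi_t$ via the anti-algebra involution $*$ is valid, the expansion $\eta\eta^*=|G|\,\chi_t\chi_t^*+\chi_t Z\chi_t^*$ follows, and the comparison of the multiplicity of $g$ (at least $|G|$ on one side, at most $1$ for an irreducible $\eta$ on the other) yields the contradiction.

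The paper itself gives no argument, deferring to \cite[Lemma 2.4.1]{Natale4}. The proof there rests on the same counting principle but packages it more economically: by Frobenius reciprocity, $m(\eta,\eta)=m(\chi_t^*\chi_t,\,\chi_t'(\chi_t')^*)$, and since $\chi_t^*\chi_t$ and $\chi_t'(\chi_t')^*$ each contain $\sum_{g\in G}g$ (using $G[\chi_t^*]=G[\chi_t']=G$), one gets $m(\eta,\eta)\geq |G|\geq 2$ directly, so $\eta$ cannot be irreducible. That route avoids your left-to-right stabilizer conversion entirely, since it only ever multiplies a character by its own dual on the appropriate side. Your version buys a little extra information along the way ($G\subseteq G[\eta]$ and the explicit lower bound on the coefficient of each $g$ in $\eta\eta^*$), at the cost of the extra dualisation step, which you correctly identify as the one substantive point needing justification.
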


\begin{proof}
This is a consequence of \cite[Lemma 2.4.1]{Natale4}.
\end{proof}

\subsection{Radford's biproduct}\label{sec2-3} In what follows, we
briefly summarize results from \cite{Radford}. Let $A$ be a
semisimple Hopf algebra and let ${}^A_A\mathcal{YD}$ denote the
braided category of Yetter-Drinfeld modules over $A$. Let $R$ be a
semisimple Yetter-Drinfeld Hopf algebra in ${}^A_A\mathcal{YD}$.
Denote by $\rho :R\to A\otimes R$, $\rho (a)=a_{-1} \otimes a_0 $,
and $\cdot :A\otimes R\to R$, the coaction and action of $A$ on $R$,
respectively. We shall use the notation $\Delta (a)=a^1\otimes a^2$
and $S_R $ for the comultiplication and the antipode of $R$,
respectively.

Since $R$ is in particular a module algebra over $A$, we can form
the smash product (see \cite[Definition 4.1.3]{Montgomery}). This is
an algebra with underlying vector space $R\otimes A$, multiplication
is given by $$(a\otimes g)(b\otimes h)=a(g_1 \cdot b)\otimes g_2 h,
\mbox{\;for all\;}g,h\in A,a,b\in R,$$ and unit $1=1_R\otimes1_A$.

Since $R$ is also a comodule coalgebra over $A$, we can dually form
the smash coproduct. This is a coalgebra with underlying vector
space $R\otimes A$, comultiplication is given by $$\Delta (a\otimes
g)=a^1\otimes (a^2)_{-1} g_1 \otimes (a^2)_0 \otimes g_2
,\mbox{\;for all\;}h\in A,a\in R, $$ and counit
$\varepsilon_R\otimes\varepsilon_A$.

As observed by D. E. Radford (see \cite[Theorem 1]{Radford}), the
Yetter-Drinfeld condition assures that $R\otimes A$ becomes a Hopf
algebra with these structures. This Hopf algebra is called the
Radford's biproduct of $R$ and $A$. We denote this Hopf algebra by $
R\#A$ and write $a\# g=a\otimes g$ for all $g\in A,a\in R$. Its
antipode is given by
$$S(a\# g)=(1\# S(a_{-1} g))(S_R (a_0 )\# 1),\mbox{\;for
all\;}g\in A,a\in R.$$

A biproduct $R\#A$ as described above is characterized by the
following property(see \cite[Theorem 3]{Radford}): suppose that $H$
is a finite-dimensional Hopf algebra endowed with Hopf algebra maps
$\iota:A\to H$ and $\pi:H\to A$ such that $\pi \iota:A\to A$ is an
isomorphism. Then the subalgebra $R= H^{co\pi}$ has a natural
structure of Yetter-Drinfeld Hopf algebra over $A$ such that the
multiplication map $R\#A\to H$ induces an isomorphism of Hopf
algebras.

The following theorem is a direct consequence of \cite[Lemma
4.1.9]{Natale4}. We give the proof for the sake of completeness.
\begin{thm}\label{prop1}
Let $H$ be a semisimple Hopf algebra of dimension $p^2q^2$, where
$p,q$ are distinct prime numbers. If $gcd(|G(H)|,|G(H^*)|)=p^2$,
then $H\cong R\#kG$ is a biproduct, where $kG$ is the group algebra
of group $G$ of order $p^2$, $R$ is a semisimple Yetter-Drinfeld
Hopf algebra in $^{kG}_{kG}\mathcal{YD}$ of dimension $q^2$.
\end{thm}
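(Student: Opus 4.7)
The plan is to apply the Radford biproduct characterization recalled at the end of the excerpt: namely, to produce a group $G$ of order $p^2$ together with Hopf algebra maps $\iota:kG\to H$ and $\pi:H\to kG$ whose composition $\pi\iota$ is an isomorphism. The hypothesis $\gcd(|G(H)|,|G(H^*)|)=p^2$ is tailor-made to construct both maps, and the numerical constraint $\dim H=p^2q^2$ with $\gcd(p,q)=1$ is what will force their composition to be invertible.

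First, I would use $p^2 \mid |G(H)|$ to pick a subgroup $G_1\subseteq G(H)$ of order $p^2$. Since every group of order $p^2$ is abelian and every semisimple Hopf algebra of dimension $p^2$ over $k$ is a group algebra (see the references in Section~\ref{sec1}), this yields an injective Hopf algebra map $\iota:kG_1\hookrightarrow H$. Similarly, $p^2\mid |G(H^*)|$ gives a Hopf subalgebra $kF\subseteq H^*$ with $|F|=p^2$; dualizing produces a surjective Hopf algebra map $\pi:H\twoheadrightarrow B$ onto the Hopf algebra $B=(kF)^*$, which is again isomorphic to $kG_2$ for some group $G_2$ of order $p^2$ by the same classification. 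By the dimension formula for coinvariants recalled in Section~\ref{sec2-1}, we have $\dim H^{co\pi}=\dim H/\dim \pi(H)=q^2$.

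The central step is to show that $\pi\iota:kG_1\to kG_2$ is an isomorphism. Being a Hopf algebra map between group algebras it arises from a group homomorphism $\varphi:G_1\to G_2$, and it suffices to prove $\ker\varphi$ is trivial. If $g\in\ker\varphi$, then $\iota(g)$ is a group-like element of $H$ with $\pi(\iota(g))=1$, hence
\[
(\mathrm{id}\otimes\pi)\Delta(\iota(g))=\iota(g)\otimes \pi(\iota(g))=\iota(g)\otimes 1,
\]
so $\iota(g)\in H^{co\pi}$. Therefore $k\iota(\ker\varphi)$ is a Hopf subalgebra of $H$ contained in $H^{co\pi}$, and Lemma~\ref{lem5} forces $|\ker\varphi|$ to divide $\dim H^{co\pi}=q^2$. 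Since $|\ker\varphi|$ is a power of $p$ and $\gcd(p,q)=1$, this implies $|\ker\varphi|=1$. Thus $\varphi$ is injective, and because $|G_1|=|G_2|=p^2$ it is an isomorphism.

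With $\pi\iota$ identified as an isomorphism, the Radford characterization (\cite[Theorem~3]{Radford}, quoted in Section~\ref{sec2-3}) applies directly: $R:=H^{co\pi}$ inherits the structure of a semisimple Yetter-Drinfeld Hopf algebra in ${}^{kG_2}_{kG_2}\mathcal{YD}$, and multiplication induces an isomorphism $R\# kG_2\cong H$. The dimension count $\dim R=q^2$ completes the proof. The only point that requires real argument (rather than bookkeeping) is the injectivity of $\varphi$, and the mechanism there—coprimality of $p$ and $q$ together with Lemma~\ref{lem5}—is the heart of the statement.
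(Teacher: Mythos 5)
Your proposal is correct and follows essentially the same route as the paper: take a subgroup $K\subseteq G(H^*)$ of order $p^2$, transpose the inclusion $kK\subseteq H^*$ to get a projection with $\dim H^{co\pi}=q^2$, and use Lemma~\ref{lem5} together with $\gcd(p,q)=1$ to show no nontrivial group-like of the order-$p^2$ subgroup of $G(H)$ lies in the coinvariants, so the composite is an isomorphism and Radford's characterization applies. Your write-up merely makes the group homomorphism $\varphi$ and its kernel explicit where the paper argues element-by-element.
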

\begin{proof}
By assumption and Sylow Theorem, $G(H^*)$ has a subgroup $K$ of
order $p^2$. Considering the Hopf algebra map $q: H\to (kK)^*$
obtained by transposing the inclusion $kK\subseteq H^*$, we have
that ${\rm dim}H^{coq}=q^2$. Again by assumption and Sylow Theorem,
$G(H)$ also has a subgroup $G$ of order $p^2$. If there exists an
element $1\neq g\in G$ such that $g$ appears in $H^{coq}$, then
$k\langle g\rangle\subseteq H^{coq}$ since $H^{coq}$ is a subalgebra
of $H$. But this contradicts Lemma \ref{lem5} since ${\rm
dim}k\langle g\rangle$ does not divide ${\rm dim}H^{coq}$.
Therefore, $H^{coq}\cap kG=k1$. This means that the restriction
$q|_{kG}$ is injective, and hence $q|_{kG}: kG\to (kK)^*$ is an
isomorphism. Finally, from the discussion above, we know that
$H\cong R\# kG$ is a biproduct, where $R=H^{coq}$.
\end{proof}

\section{Semisimple Hopf algebras of dimension $p^2q^2$}\label{sec3}
Let $p,q$ be distinct prime numbers with $p^4<q$, and $H$ a
semisimple Hopf algebra of dimension $p^2q^2$. By Nichols-Zoeller
Theorem \cite{Nichols2}, the order of $G(H^*)$ divides ${\rm dim}H$.
Moreover, $|G(H^*)|\neq1$ by Lemma \ref{lem1}. Again by Lemma
\ref{lem1}, the dimension of a simple $H$-module can only be
$1,p,p^2$ or $q$. Let $a,b,c$ be the number of non-isomorphic simple
$H$-modules of dimension $p,p^2$ and $q$, respectively. It follows
that we have an equation $p^2q^2=|G(H^*)|+ap^2+bp^4+cq^2$. In
particular, if $|G(H^*)|=p^2q^2$ then $H$ is a dual group algebra.

The proof of the following lemma is direct.
\begin{lem}\label{lem6}
The irreducible characters of degree $1, p$ and $p^2$ span a
standard subalgebra of $R(H)$ corresponding to a quotient Hopf
algebra $\overline{H}$ of $H$ of dimension $|G(H^*)|+ap^2+bp^4$. In
particular, $|G(H^*)|$ divides ${\rm dim}\overline{H}$ and
$|G(H^*)|+ap^2+bp^4$ divides ${\rm dim}H$.
\end{lem}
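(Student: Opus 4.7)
The plan is to verify that $X := X_1 \cup X_p \cup X_{p^2}$ spans a $*$-invariant standard subalgebra of $R(H)$, apply Nichols' correspondence to extract the quotient Hopf algebra $\overline{H}$, and finish the divisibility assertions via Nichols--Zoeller.

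The crucial observation is the degree estimate: if $\chi,\psi$ are irreducible characters with $\deg\chi,\deg\psi\in\{1,p,p^2\}$, then the product $\chi\psi$ has degree at most $p^2\cdot p^2=p^4$. Since by hypothesis $p^4<q$, every irreducible constituent $\alpha$ of $\chi\psi$ satisfies $\deg\alpha\leq p^4<q$. Using Lemma \ref{lem1}, which restricts the possible degrees of simple $H$-modules to $\{1,p,p^2,q\}$, the only compatible option is $\deg\alpha\in\{1,p,p^2\}$, i.e.\ $\alpha\in X$. Hence the $k$-span of $X$ is closed under multiplication. It is also $*$-invariant because $\deg(\chi^*)=\deg\chi$, so $X$ itself is closed under the involution $\chi\mapsto\chi^*$.

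Next I would invoke the bijection in \cite[Theorem 6]{Nichols} between $*$-invariant standard subalgebras of $R(H)$ and quotient Hopf algebras of $H$. This produces a quotient Hopf algebra $\overline{H}$ whose irreducible characters are precisely the elements of $X$; thus, summing squares of degrees,
\begin{equation*}
{\rm dim}\overline{H}=|G(H^*)|\cdot 1^{2}+a\cdot p^{2}+b\cdot p^{4},
\end{equation*}
which is exactly the claimed dimension.

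For the two divisibility statements, I would note that the group-like elements of $H^*$ all lie in the standard subalgebra and therefore remain group-like in $\overline{H}^*$, so $kG(H^*)$ is a Hopf subalgebra of $\overline{H}^*$; applying Nichols--Zoeller \cite{Nichols2} to $\overline{H}^*$ yields $|G(H^*)|\mid{\rm dim}\overline{H}$. Dually, the surjection $H\twoheadrightarrow\overline{H}$ corresponds to an inclusion $\overline{H}^*\hookrightarrow H^*$, so a further application of Nichols--Zoeller gives ${\rm dim}\overline{H}\mid{\rm dim}H$. There is no real obstacle here: the entire argument hinges on the numerical gap $p^4<q$, which forces the degree--$q$ characters to stay out of any product of low-degree characters; once that is in hand the lemma is immediate from Nichols' correspondence and standard dimension divisibility.
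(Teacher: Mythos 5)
Your argument is correct and is exactly the ``direct'' proof the paper omits: the hypothesis $p^4<q$ forces every irreducible constituent of a product of characters of degree $\leq p^2$ to again have degree in $\{1,p,p^2\}$, so $X_1\cup X_p\cup X_{p^2}$ spans a $*$-invariant standard subalgebra, and Nichols' correspondence plus two applications of Nichols--Zoeller give the dimension and divisibility claims. Since the paper states only that the proof is direct, there is nothing to compare beyond noting that your write-up supplies precisely the intended steps.
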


\begin{lem}\label{lem7}
If $|G(H^*)|=p$ or $pq$, then $H$ is upper semisolvable.
\end{lem}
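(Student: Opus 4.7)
The plan is to reduce to Lemma \ref{lem2} applied to $H^*$. Since the duality statement in Section \ref{sec2-1} (\cite[Corollary 3.3]{Montgomery}) says that $H$ is upper semisolvable iff $H^*$ is lower semisolvable, it suffices to exhibit a Hopf subalgebra of $H^*$ of dimension $pq^2$. Dually, this amounts to producing a quotient Hopf algebra of $H$ of dimension $pq^2$, and Lemma \ref{lem6} provides a natural candidate: the quotient $\overline{H}$ coming from the standard subalgebra spanned by the irreducible characters of degree $1, p, p^2$.

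First I would write $\dim\overline{H}=|G(H^*)|+ap^2+bp^4$ and use Lemma \ref{lem6} to record the two constraints that $|G(H^*)|\mid \dim\overline{H}$ and $\dim\overline{H}\mid p^2q^2$. Then I would treat the two hypotheses separately. For $|G(H^*)|=p$, one factors $\dim\overline{H}=p(1+ap+bp^3)$, observes that $1+ap+bp^3\equiv 1\pmod p$ is coprime to $p$, and concludes that $1+ap+bp^3\in\{1,q,q^2\}$. The values $1$ and $q$ are ruled out by feeding the counting equation $p^2q^2=|G(H^*)|+ap^2+bp^4+cq^2$ into them: in each case the resulting equation for $c$ forces $q\mid p$, contradicting $p<q$. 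So only $\dim\overline{H}=pq^2$ survives (with $c=p^2-p$). The argument for $|G(H^*)|=pq$ is parallel: $\dim\overline{H}=p(q+ap+bp^3)$, the second factor is coprime to $p$ and is $\ge q$, so it equals $q$ or $q^2$; the value $q$ again produces $q\mid p$ in the counting equation, leaving only $\dim\overline{H}=pq^2$.

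Once $\dim\overline{H}=pq^2$ is established, transposing the surjection $H\twoheadrightarrow\overline{H}$ gives a Hopf subalgebra $\overline{H}^*\hookrightarrow H^*$ of dimension $pq^2$. Applying Lemma \ref{lem2} to the semisimple Hopf algebra $H^*$ (which also has dimension $p^2q^2$) shows that $H^*$ is lower semisolvable, hence $H$ is upper semisolvable.

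The only mildly delicate step is ruling out the intermediate dimensions for $\overline{H}$ — the hypothesis $p^4<q$ is not needed here, as elementary divisibility in the counting equation already forces the contradictions. The bulk of the work is just careful bookkeeping with the integer constraints; no new structural input beyond Lemmas \ref{lem6} and \ref{lem2} is required.
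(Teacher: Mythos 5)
Your proposal is correct and follows essentially the same route as the paper: use Lemma \ref{lem6} to produce the quotient $\overline{H}$, pin down $\dim\overline{H}=pq^2$ by divisibility in the counting equation, and then apply Lemma \ref{lem2} to $H^*$. The only (minor) difference is that your factorization $\dim\overline{H}=p\cdot m$ with $\gcd(m,p)=1$ excludes the cases $\dim\overline{H}=p^2$ and $p^2q$ by a congruence alone, whereas the paper rules out $\dim\overline{H}=p^2$ by appealing to Masuoka's $p^n$-theorem; your version is slightly more elementary but the argument is the same in substance.
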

\begin{proof}
First, $c\neq0$, since otherwise we get the contradiction $p^2\mid
p$.

Consider the quotient Hopf algebra $\overline{H}$ from Lemma
\ref{lem6}. Then $p\mid{\rm dim}\overline{H}$ and since $c\neq0$,
then ${\rm dim}\overline{H} < p^2q^2$. Therefore ${\rm
dim}\overline{H} = p, pq, p^2q, pq^2$ or $p^2$. Moreover, ${\rm
dim}\overline{H}\neq p^2$, since otherwise
$(\overline{H})^*\subseteq kG(H^*)$ by \cite{Masuoka2}, but $p^2
={\rm dim}\overline{H}$ does not divide $|G(H^*)|= p$ or $pq$.

The possibilities ${\rm dim}\overline{H}= p, pq$ or $p^2q$ lead,
respectively to the contradictions $p^2q^2= p+cq^2$,
$p^2q^2=pq+cq^2$ and $p^2q^2=p^2q+cq^2$. Hence these are also
discarded, and therefore ${\rm dim}\overline{H}=pq^2$. This implies
that $H$ is upper semisolvable, by Lemma \ref{lem2}.
\end{proof}

\begin{lem}\label{lem8}
$|G(H^*)|\neq q$.
\end{lem}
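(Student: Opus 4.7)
The plan is to assume $|G(H^*)|=q$ and derive a contradiction by carefully analyzing the dimension of the quotient Hopf algebra $\overline{H}$ from Lemma \ref{lem6}.

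First, I would catalog the possible values of $\dim\overline{H}$. By Lemma \ref{lem6}, $q=|G(H^*)|$ divides $\dim\overline{H}$ and $\dim\overline{H}=q+ap^2+bp^4$ divides $p^2q^2$. Hence $\dim\overline{H}$ is one of $q,pq,p^2q,q^2,pq^2,p^2q^2$. I then eliminate all but one possibility by combining the global dimension equation $p^2q^2=q+ap^2+bp^4+cq^2$ with coprimality of $p^2$ to $q$ and to $q-1,pq-1,p^2q-1$ respectively. For instance, $\dim\overline{H}=pq$ would force $p^2\mid q(p-1)$, impossible since $\gcd(p^2,q(p-1))=1$; similarly $\dim\overline{H}=p^2q,pq^2,p^2q^2$ each yield an impossible divisibility, and $\dim\overline{H}=q$ forces $a=b=0$ and then $q(p^2-c)=1$, absurd. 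The only surviving case is $\dim\overline{H}=q^2$, which by the way forces $c=p^2-1$ and $p^2\mid q-1$ (consistent with $p^4<q$, so this case cannot be killed by crude divisibility alone).

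Next, I would exploit the fact that $\overline{H}$ is a semisimple Hopf algebra of dimension $q^2$. By Masuoka's classification \cite{Masuoka2} cited in Section \ref{sec1}, every such Hopf algebra is trivial, i.e.\ isomorphic to $kG$ or $(kG)^*$ for some abelian group $G$ of order $q^2$ (there are no non-abelian groups of order $q^2$). Consequently every irreducible $\overline{H}$-module is one-dimensional.

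Finally, I use the correspondence between simple $\overline{H}$-modules and irreducible characters of $H$ spanning the standard subalgebra from Lemma \ref{lem6}: these are exactly the irreducible characters of $H$ of degrees $1,p,p^2$. Since all simples of $\overline{H}$ have dimension $1$, we must have $a=b=0$ for $H$ itself. But then $\dim\overline{H}=q+0+0=q$, contradicting the conclusion $\dim\overline{H}=q^2$ reached above. Therefore $|G(H^*)|\ne q$.

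The main obstacle I expect is the case analysis in the first step; the divisibility arguments for $\dim\overline{H}\in\{pq,p^2q,pq^2,p^2q^2\}$ are straightforward once one observes $\gcd(p,q)=1$ and $\gcd(p,p^kq-1)=1$, but $\dim\overline{H}=q^2$ survives the crude numerics and is only killed by invoking the structural classification of dimension-$q^2$ Hopf algebras. The key insight is that this structural input (Masuoka) is strong enough to force all simples to be one-dimensional, which then collides with the dimension count.
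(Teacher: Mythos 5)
Your proof is correct, but it takes a genuinely different route from the paper's. The paper argues character-theoretically: for $\chi$ of degree $p$ or $p^2$, Lemma \ref{lem3}(1) forces $G[\chi]$ to be a subgroup of the order-$q$ group $G(H^*)$ whose order divides $p^2$ or $p^4$, hence trivial; the decomposition $\chi\chi^*=\varepsilon+\sum_{\deg\alpha>1}m(\alpha,\chi\chi^*)\alpha$ inside $\overline{H}$ then yields $p^2=1+mp$ or $p^4=1+mp$, impossible, so $a=b=0$ and $p^2q^2=q+cq^2$ gives the contradiction directly. You instead run the full divisibility analysis on $\dim\overline{H}\in\{q,pq,p^2q,q^2,pq^2,p^2q^2\}$ (your eliminations all check out: $\dim\overline{H}-q=p^2(a+bp^2)$ forces $p^2\mid q(p-1)$, $p\mid pq-1$, etc., all impossible) and then kill the one surviving case $\dim\overline{H}=q^2$ by invoking Masuoka's classification of semisimple Hopf algebras of dimension $q^2$: $\overline{H}$ would be trivial, hence commutative, hence all its simples one-dimensional, forcing $a=b=0$ and $\dim\overline{H}=q$, a contradiction. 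Both arguments are valid and both rest on Lemma \ref{lem6} (which is where $p^4<q$ enters). The paper's version is shorter, self-contained within the character formalism, and is the template reused verbatim in Lemma \ref{lem9}; yours trades that for a heavier structural input but a more mechanical case analysis. One small wording issue: your opening claim of ``coprimality of $p^2$ to $q-1$'' is false in general (indeed $p^2\mid q-1$ is exactly what the $q^2$ case requires), but since you explicitly concede that this case survives the crude divisibility and dispatch it by other means, this is a slip of phrasing rather than a gap.
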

\begin{proof}
Suppose on the contrary that $|G(H^*)|= q$. By Lemma \ref{lem6},
${\rm dim}\overline{H}=q+ap^2+bp^4$. On the other hand, the product
of irreducible characters of $\overline{H}$ of degree $>1$ cannot
contain nontrivial characters of degree $1$, by Lemma \ref{lem3}(1).
If $a\neq0$ or $b \neq0$, this would imply $p^2=1+mp$ or $p^4=1+mp$
for some positive integer $m$, which is impossible. Therefore
$a=b=0$. So we have $p^2q^2=q+cq^2$, which is a contradiction.
\end{proof}

\begin{lem}\label{lem9}
If $|G(H^*)|=q^2$, then $H$ is upper semisolvable.
\end{lem}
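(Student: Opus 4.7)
The plan is to follow the pattern of Lemmas~\ref{lem7} and~\ref{lem8}: use Lemma~\ref{lem6} to pin down the coalgebra type of $H$, exploit the gap $p^4<q$ to extract strong character-theoretic information, and then exhibit a normal Hopf quotient with trivial coinvariants witnessing upper semisolvability.

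First, Lemma~\ref{lem6} applied with $|G(H^*)|=q^2$ gives $q^2\mid{\rm dim}\overline{H}$ and ${\rm dim}\overline{H}\mid p^2q^2$, so ${\rm dim}\overline{H}\in\{q^2,pq^2,p^2q^2\}$. The possibilities $p^2q^2$ and $pq^2$ are excluded by substituting into $p^2q^2=q^2+ap^2+bp^4+cq^2$: the former forces $c=0$ and $p^2\mid q^2(p^2-1)$, while the latter forces $p^2\mid q^2(p-1)$; since $\gcd(p,q)=1$, both would require $p^2$ to divide a positive integer smaller than itself, which is absurd. Hence ${\rm dim}\overline{H}=q^2$, $a=b=0$, and $c=p^2-1$; in particular, $H$ has coalgebra type $(1,q^2;q,p^2-1)$.

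Second, the group $G(H^*)$ of order $q^2$ acts on $X_q$ by left multiplication, with orbit sizes dividing $q^2$ and thus lying in $\{1,q,q^2\}$. Since $|X_q|=p^2-1<q$ (from $p^4<q$), every orbit must be a singleton, so $G[\chi]=G(H^*)$ for every $\chi\in X_q$. Combining this with Lemma~\ref{lem3}(1) and the displayed formula for $\chi\chi^*$ in Section~\ref{sec2-2} then forces $\chi\chi^*=\sum_{g\in G(H^*)}g$ for every $\chi\in X_q$ (the $q^2$ terms in the first sum already account for the full degree $q^2$). Together with the trivial identity $\chi\chi^*=\varepsilon$ for $\chi\in G(H^*)$, we conclude that $\chi\chi^*\in kG(H^*)$ for every irreducible character $\chi\in\widehat{H}$.

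The main obstacle is to upgrade this character-theoretic condition to the normality of the Hopf epimorphism $\pi:H\to\overline{H}$ corresponding via \cite[Theorem~6]{Nichols} to the $*$-invariant standard subalgebra $kG(H^*)\subseteq R(H)$. The condition ``$\chi\chi^*\in kG(H^*)$ for all irreducible $\chi$'' is precisely the adjoint-invariance criterion that singles out \emph{normal} Hopf quotients among all Hopf quotients, and I would deduce this correspondence from the circle of ideas in \cite{Natale4} (in the same spirit as Lemma~\ref{lem4}). Once normality is secured, $H^{co\pi}$ is a normal Hopf subalgebra of $H$ of dimension $p^2$, hence trivial by Masuoka's theorem, while $\overline{H}$ has algebra type $(1,q^2)$ and is therefore commutative and isomorphic to $k^F$ for an abelian group $F$ of order $q^2$, also trivial. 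The chain $H\xrightarrow{\pi}\overline{H}\xrightarrow{\varepsilon}k$ of normal Hopf epimorphisms with trivial coinvariants then exhibits $H$ as upper semisolvable.
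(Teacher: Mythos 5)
Your first two paragraphs are correct and essentially follow the paper's route: the paper likewise reduces to $a=b=0$ (via the argument of Lemma \ref{lem8}; your divisibility argument through Lemma \ref{lem6} is a clean alternative), obtains the algebra type $(1,q^2;q,p^2-1)$, and uses the orbit count $|X_q|=p^2-1<q$ to conclude $G[\chi]=G(H^*)$ for every $\chi\in X_q$, whence $\chi\chi^*=\sum_{g\in G(H^*)}g$.

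The genuine gap is in your third paragraph, exactly at the step you flag as ``the main obstacle''. The principle you invoke --- that ``$\chi\chi^*\in kG(H^*)$ for all irreducible $\chi$'' is \emph{precisely} the condition characterizing normal Hopf quotients --- is false as an equivalence: for $H=kS_3$ the quotient $kS_3\to k[S_3/A_3]$ corresponds to the standard subalgebra spanned by the two linear characters and is normal (its coinvariants are $kA_3$), yet for the $2$-dimensional irreducible character one has $\chi\chi^*=\varepsilon+\mathrm{sgn}+\chi$, which does not lie in that standard subalgebra. You only need the sufficiency direction, and that direction is in fact true, but it is a substantive theorem (it can be extracted from the theory of universal gradings, or proved directly), not something that can be discharged by appeal to ``the circle of ideas in \cite{Natale4}''; as written, the decisive step of the proof is asserted rather than proved, and the general statement backing it is wrong. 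The paper closes this gap concretely: from $G[\chi_q]=G(H^*)$ together with the rule $m(g,\chi\psi)=1$ iff $\psi=\chi^*g$ one gets the \emph{two-sided} stabilization $g\chi_q=\chi_q g=\chi_q$ for all $g\in G(H^*)$ and $\chi_q\in X_q$ (your condition yields this too, but only after applying $*$, and it must be said); hence $gC_i=C_i=C_ig$ for each $q^2$-dimensional simple subcoalgebra $C_i$ of $H^*$; \cite[Proposition 3.2.6]{Natale4} then shows that $kG(H^*)$ is a normal Hopf subalgebra of $k[C_1,\dots,C_{p^2-1}]$, which equals $H^*$ by a dimension count; finally the chain $k\subseteq kG(H^*)\subseteq H^*$, with the quotient of dimension $p^2$ trivial by \cite{Masuoka2}, makes $H^*$ lower semisolvable and hence $H$ upper semisolvable by \cite[Corollary 3.3]{Montgomery}. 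If you replace your appeal to the unnamed criterion by this subcoalgebra argument (or by a genuine proof of the sufficiency direction), your proof is complete.
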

\begin{proof}
A similar argument as in Lemma \ref{lem8} shows that $a=b=0$. Hence,
$H$ is of type $(1,q^2;q,p^2-1)$ as an algebra. Equivalently, $H^*$
is of type $(1,q^2;q,p^2-1)$ as a coalgebra. The group $G(H^*)$,
being abelian, acts by left multiplication on the set $X_q$. The set
$X_q$ is a union of orbits which have length $1,q$ or $q^2$. Since
$|X_q|=p^2-1$ is less than $q$, every orbit has length $1$. That is,
$G[\chi_q]=G(H^*)$ for all $\chi_q\in X_q$.  Let $g\in G(H^*)$ and
$\chi_q\in X_q$. Then $g\chi_q=\chi_q$ and
$g^{-1}\chi_q^*=\chi_q^*$. This means that $g\chi_q=\chi_qg=\chi_q$.

Let $C_i (i=1,\cdots,p^2-1)$ be the non-isomorphic $q^2$-dimensional
simple subcoalgebra of $H^*$. Then $gC_i=C_i=C_ig$ for all $g\in
G(H^*)$. By \cite[Proposition 3.2.6]{Natale4}, $G(H^*)$ is normal in
$k[C_1,\cdots,C_{p^2-1}]$, where $k[C_1,\cdots,C_{p^2-1}]$ denotes
the subalgebra generated by $C_1,\cdots,C_{p^2-1}$. It is a Hopf
subalgebra of $H^*$ containing $G(H^*)$.  Counting dimension, we
know $k[C_1,\cdots,C_{p^2-1}]=H^*$. Since $kG(H^*)$ is a group
algebra and the quotient $H^*/H^*(kG(H^*))^+$ is trivial (see
\cite{Masuoka2}), $H^*$ is lower semisolvable. Hence, $H$ is upper
semisolvable.
\end{proof}

From the discussion above, the following lemma is obvious.
\begin{lem}
If $|G(H^*)|=pq^2$, then $H$ is upper semisolvable.
\end{lem}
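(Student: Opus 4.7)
The plan is to reduce immediately to Lemma \ref{lem2} applied to the dual Hopf algebra. The key observation is that when $|G(H^*)|=pq^2$, the group algebra $kG(H^*)$ is itself a Hopf subalgebra of $H^*$ of dimension exactly $pq^2$, so I get a Hopf subalgebra of the required size essentially for free. I would then apply Lemma \ref{lem2} with $H^*$ (which is semisimple of dimension $p^2q^2$, with $p<q$) in the role of $H$ and $kG(H^*)$ in the role of $K$, concluding that $H^*$ is lower semisolvable.

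It then remains only to dualize: by \cite[Corollary 3.3]{Montgomery}, $H^*$ being lower semisolvable is equivalent to $H=(H^*)^*$ being upper semisolvable, which is the desired statement. There is essentially no obstacle here, because the hard part has already been packaged inside Lemma \ref{lem2}; that is, the normality of the index-$p$ subalgebra via Kobayashi's theorem \cite{Kobayashi}, the production of a proper normal Hopf subalgebra of the $pq^2$-dimensional piece via Lemma \ref{lem1} together with \cite[Theorem 5.4.1]{Natale3}, and the triviality of the resulting prime-dimensional quotients via \cite{Zhu}, \cite{Etingof}, \cite{Gelaki}, \cite{Masuoka}, \cite{Masuoka2}. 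Once one notices that $kG(H^*)$ itself realizes the required Hopf subalgebra of $H^*$, the remaining argument is purely formal, which is why the lemma is announced as obvious from the preceding discussion.
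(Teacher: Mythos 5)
Your proposal is correct and matches the paper's intent: the paper states this lemma is "obvious from the discussion above," and the intended argument is precisely that $kG(H^*)$ is a Hopf subalgebra of $H^*$ of dimension $pq^2$, so Lemma \ref{lem2} applied to $H^*$ (semisimple of dimension $p^2q^2$ with $p<q$, since $p^4<q$) gives lower semisolvability of $H^*$, whence $H$ is upper semisolvable by \cite[Corollary 3.3]{Montgomery}. No gaps.
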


\begin{lem}\label{lem11}
If $|G(H^*)|=p^2$ or $p^2q$ then $H$ is either semisolvable or
isomorphic to a Radford's biproduct $R\# kG$, where $kG$ is the
group algebra of group $G$ of order $p^2$, $R$ is a semisimple
Yetter-Drinfeld Hopf algebra in ${}^{kG}_{kG}\mathcal{YD}$ of
dimension $q^2$.
\end{lem}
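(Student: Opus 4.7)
The plan is to reduce to Theorem \ref{prop1} by constraining $|G(H)|$ via duality. Since $H^*$ is also a semisimple Hopf algebra of dimension $p^2q^2$, and since upper semisolvability of $H^*$ is equivalent to lower semisolvability of $H$, I can apply Lemmas \ref{lem7}, \ref{lem8}, \ref{lem9} and the preceding unnumbered lemma to $H^*$. This yields $|G(H)|>1$ (from Lemma \ref{lem1} applied to $H^*$), $|G(H)|\neq q$, and that $H$ is semisolvable whenever $|G(H)|\in\{p,pq,q^2,pq^2\}$. Combined with Nichols–Zoeller, the only remaining possibilities in the non-semisolvable case are $|G(H)|\in\{p^2,p^2q,p^2q^2\}$, and $|G(H)|=p^2q^2$ makes $H$ a group algebra.

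Hence, under the assumption that $H$ is not semisolvable, I may assume $|G(H)|\in\{p^2,p^2q\}$, so that $p^2$ divides both $|G(H)|$ and $|G(H^*)|$. Sylow's theorem then furnishes subgroups $P\leq G(H)$ and $K\leq G(H^*)$ of order $p^2$. I then re-run the argument of Theorem \ref{prop1}: take $\pi:H\to (kK)^*$ to be the surjection dual to the inclusion $kK\subseteq H^*$, so $\dim H^{co\pi}=q^2$. The restriction $\pi|_{kP}:kP\to(kK)^*$ is forced to be injective, since any nontrivial $g\in P$ with $\pi(g)=1$ would place the Hopf subalgebra $k\langle g\rangle$ (of dimension $p$ or $p^2$) inside $H^{co\pi}$, contradicting Lemma \ref{lem5} because a $p$-power cannot divide $q^2$. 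Thus $\pi|_{kP}$ is an isomorphism, and Radford's biproduct characterization recalled in Section \ref{sec2-3} yields $H\cong R\# kP$ with $R=H^{co\pi}$ a semisimple Yetter-Drinfeld Hopf algebra in ${}^{kP}_{kP}\mathcal{YD}$ of dimension $q^2$.

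The main obstacle is that Theorem \ref{prop1} is stated under the strict hypothesis $\gcd(|G(H)|,|G(H^*)|)=p^2$, whereas in the subcase $|G(H^*)|=|G(H)|=p^2q$ the gcd equals $p^2q$, so Theorem \ref{prop1} cannot be invoked as a black box. The resolution is the observation that Theorem \ref{prop1}'s proof only uses the existence of order-$p^2$ subgroups in both $G(H)$ and $G(H^*)$ (via Sylow), which still holds here; the biproduct argument therefore applies verbatim, and one simply re-executes it rather than cite the theorem. The remaining delicacy is the bookkeeping in Step 1: one must match each value of $|G(H)|$ to the correct previous lemma (applied to $H^*$) to obtain lower semisolvability of $H$.
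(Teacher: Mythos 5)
Your argument is correct, and it is in fact far more complete than what the paper records: the paper's entire proof of this lemma is the single sentence ``This is a corollary of Lemma \ref{lem2}.'' Taken literally, that citation can only account for the semisolvable branch (situations where $H$ or $H^*$ acquires a Hopf subalgebra of dimension $pq^2$); the biproduct branch is not justified there at all. Your route --- dualize, apply Lemmas \ref{lem7}, \ref{lem8}, \ref{lem9} and the unnumbered $pq^2$ lemma to $H^*$ so that in the non-semisolvable case $|G(H)|\in\{p^2,p^2q,p^2q^2\}$, dispose of $p^2q^2$ as the group-algebra case (worth one extra sentence: groups of order $p^2q^2$ are solvable by Burnside, so $kG(H)$ is lower semisolvable), and then run the biproduct construction --- is exactly the logic the author relies on implicitly, as one sees from the proof of the corollary following Theorem \ref{thm1}, which speaks of ``the case that the order of $G(H)$ and $G(H^*)$ are $p^2$ or $p^2q$, and $H$ is a biproduct'' and of ``the projection in Theorem \ref{prop1}.'' Your observation that Theorem \ref{prop1} cannot be quoted as a black box when $|G(H)|=|G(H^*)|=p^2q$ (the gcd is then $p^2q$, not $p^2$) identifies a genuine imprecision that the paper glosses over, and your fix is the right one: the proof of Theorem \ref{prop1} uses only the existence of order-$p^2$ subgroups $P\le G(H)$ and $K\le G(H^*)$ together with Lemma \ref{lem5}, so it applies verbatim. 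The only cosmetic point is that to invoke Radford's criterion from Section \ref{sec2-3} one should compose $\pi$ with $(\pi|_{kP})^{-1}$ to obtain a projection $H\to kP$ splitting the inclusion $kP\hookrightarrow H$; this does not affect correctness.
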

\begin{proof}
This is a corollary of Lemma \ref{lem2}.
\end{proof}

We are now in a position to give the main theorem.
\begin{thm}\label{thm1}
Let $H$ be a semisimple Hopf algebra of dimension $p^2q^2$, where
$p,q$ are prime numbers with $p^4<q$. Then $H$ is either
semisolvable or isomorphic to a Radford's biproduct $R\# kG$, where
$kG$ is the group algebra of group $G$ of order $p^2$, $R$ is a
semisimple Yetter-Drinfeld Hopf algebra in
${}^{kG}_{kG}\mathcal{YD}$ of dimension $q^2$.
\end{thm}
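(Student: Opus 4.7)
The plan is to perform a complete case analysis on $|G(H^*)|$ and invoke the preceding lemmas in each case. By the Nichols--Zoeller theorem, $|G(H^*)|$ divides ${\rm dim}H=p^2q^2$, and applying Lemma \ref{lem1} to $H$ produces a nontrivial one-dimensional character, so dually $|G(H^*)|\neq 1$. Hence the candidate values for $|G(H^*)|$ are
\[
\{p,\,q,\,p^2,\,pq,\,q^2,\,p^2q,\,pq^2,\,p^2q^2\}.
\]

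I would then walk through these eight values in turn. Lemma \ref{lem8} discards $|G(H^*)|=q$ outright. If $|G(H^*)|=p^2q^2$ then $H^*$ equals its grouplike subalgebra $kG(H^*)$, so $H$ is a dual group algebra and trivially semisolvable. For $|G(H^*)|\in\{p,pq\}$, Lemma \ref{lem7} yields upper semisolvability; for $|G(H^*)|=q^2$, Lemma \ref{lem9} does the same; and for $|G(H^*)|=pq^2$, the unnamed lemma immediately preceding Lemma \ref{lem11} (itself a one-line consequence of Lemma \ref{lem2}) gives upper semisolvability. Finally, when $|G(H^*)|\in\{p^2,\,p^2q\}$, Lemma \ref{lem11} is exactly the dichotomy asserted by the theorem: either $H$ is semisolvable, or $H\cong R\#kG$ with $|G|=p^2$ and $R$ a semisimple Yetter--Drinfeld Hopf algebra in ${}^{kG}_{kG}\mathcal{YD}$ of dimension $q^2$.

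Since every divisor of $p^2q^2$ has now been accounted for and each case lands in one of the two alternatives of the statement, the proof is concluded. The hypothesis $p^4<q$ has been absorbed into the preceding lemmas, most visibly in Lemma \ref{lem9} where $|X_q|\le p^2-1<q$ forces each $G(H^*)$-orbit on $X_q$ to be a fixed point, and the biproduct identification rests on Theorem \ref{prop1} through Lemma \ref{lem11}. Consequently there is no substantive obstacle at this final assembly step; the real work has already been done, and the main theorem is essentially a unified restatement of Lemmas \ref{lem7}--\ref{lem11}. The only discipline the plan requires is making sure the divisor list above is genuinely exhaustive before declaring victory.
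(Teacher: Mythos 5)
Your proposal is correct and matches the paper's (implicit) argument exactly: the theorem is stated without a separate proof precisely because it is the assembly of Lemma \ref{lem7}, Lemma \ref{lem8}, Lemma \ref{lem9}, the unnamed $pq^2$ lemma, and Lemma \ref{lem11}, together with the observations at the start of Section \ref{sec3} that $|G(H^*)|\neq 1$ and that $|G(H^*)|=p^2q^2$ forces $H$ to be a dual group algebra. Your divisor list is exhaustive and each case lands where you say it does, so there is nothing to add.
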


In analogy with the situations for finite groups, it is enough for
many applications to know that a Hopf algebra is semisolvable. Under
certain restrictions on $p$ and $q$, we can obtain a more precise
result.
\begin{cor}
If $p$ does not divide $q-1$ or $q+1$, then $H$ is semisolvable.
\end{cor}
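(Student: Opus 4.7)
The plan is to use Theorem~\ref{thm1} to reduce the question to the non-semisolvable alternative, namely that $H\cong R\# kG$ is a Radford biproduct with $|G|=p^2$ and $R$ a semisimple Yetter--Drinfeld Hopf algebra in ${}^{kG}_{kG}\mathcal{YD}$ of dimension $q^2$. The aim is to show that under the hypothesis that $p$ divides neither $q-1$ nor $q+1$, the Yetter--Drinfeld structure on $R$ is forced to be trivial, so that $H$ is in fact an ordinary tensor product $R\otimes kG$ of group algebras, and hence semisolvable.

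The key arithmetic input is that every abelian group $A$ of order $q^2$ satisfies $|\mathrm{Aut}(A)|\in\{q(q-1),\,q(q-1)^2(q+1)\}$, according as $A$ is cyclic or elementary abelian. Since $p\neq q$ and $p$ divides neither $q-1$ nor $q+1$, the prime $p$ does not divide $|\mathrm{Aut}(A)|$, so any homomorphism from a group of order $p^2$ into $\mathrm{Aut}(A)$ must be trivial. I would then apply this to $R$ as follows: the coaction $\rho:R\to kG\otimes R$ induces a $G$-grading $R=\bigoplus_{g\in G}R_g$, while the $kG$-action acts on $R$ by Hopf algebra automorphisms. Using that semisimple Hopf algebras of dimension $q^2$ are group algebras, and that the group-like elements of $R$ and of $R^*$ form abelian groups of order dividing $q^2$, the arithmetic observation above forces both the $G$-action and the $G$-coaction on $R$ to be trivial. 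Consequently $R\# kG\cong R\otimes kG$ as Hopf algebras, with $R\cong k[A]$ for some abelian group $A$ of order $q^2$. Then $H$ is the group algebra of the abelian group $A\times G$ and is trivially semisolvable via the chain $k\subseteq kG\subseteq k[A]\otimes kG$, each step being a normal inclusion with group algebra quotient.

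The main obstacle will be justifying rigorously that the triviality of the $G$-action and $G$-coaction on the group-like elements of $R$ (and $R^*$) genuinely propagates to triviality of the whole Yetter--Drinfeld structure on $R$. This requires a classification-type input for semisimple Yetter--Drinfeld Hopf algebras of dimension $q^2$ in ${}^{kG}_{kG}\mathcal{YD}$, or equivalently an argument that $R$, being a semisimple braided Hopf algebra of prime-squared dimension, is essentially determined by its group-like elements, so that constraints on the latter force the whole braided structure to degenerate.
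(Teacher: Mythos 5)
Your overall strategy differs from the paper's, and it contains a genuine gap at its core. The paper does not try to analyze the braided Hopf algebra $R$ intrinsically at all. Instead, it rules out the biproduct case by an elementary dimension count \emph{inside} $H$: it decomposes the left coideal $H^{coq}=R$ of $H$ into irreducible left coideals, whose dimensions can only be $1,p,p^2$ or $q$; it excludes the $q$-dimensional ones via Lemma~\ref{lem3}(2) (there are too few simple modules of dimension $q$ relative to $|G(H)|$), and uses Lemma~\ref{lem5} to see that the group-like part of $H^{coq}$ has order $1$ or $q$. Counting dimensions then gives $q^2\equiv 1$ or $q \pmod p$, i.e.\ $p\mid(q-1)(q+1)$, contradicting the hypothesis. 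Note in particular that the paper's conclusion in this branch is that the biproduct case simply cannot occur, not that the biproduct degenerates to a tensor product.

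The gap in your argument is the step you yourself flag as ``the main obstacle'': you need to know that a semisimple Yetter--Drinfeld Hopf algebra $R$ of dimension $q^2$ in ${}^{kG}_{kG}\mathcal{YD}$ is an ordinary group algebra $k[A]$ whose $G$-action and $G$-coaction are given by homomorphisms into $\mathrm{Aut}(A)$ (resp.\ $\mathrm{Aut}(\widehat{A})$). Masuoka's $p^n$ theorem applies to ordinary Hopf algebras, not to braided ones; a braided Hopf algebra of dimension $q^2$ in a nontrivial braided category need not be an ordinary Hopf algebra, need not have $q^2$ group-likes, and its structure is not captured by $G(R)$ and $G(R^*)$ alone. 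Indeed, classifying such $R$ is essentially the hard residual case of the whole problem (the simple examples of Galindo--Natale cited in the paper are biproducts with genuinely nontrivial braided $R$), so no known classification input fills this hole, and your $|\mathrm{Aut}(A)|$ computation --- which is correct as arithmetic --- never gets to act on the actual structure of $R$. A further warning sign is that, if your argument worked, it would prove $H\cong k[A\times G]$ is a group algebra in this case, which is far stronger than semisolvability and stronger than what the hypothesis can plausibly deliver. To repair the proof you would need to abandon the intrinsic analysis of $R$ and argue, as the paper does, with the coideal decomposition of $H^{coq}$ inside $H$.
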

\begin{proof}
It suffices to consider the case that the order of $G(H)$ and
$G(H^*)$ are $p^2$ or $p^2q$, and $H$ is a biproduct. Let $q: H\to
(kK)^*$ be the projection in Theorem \ref{prop1}. Then we have that
${\rm dim}H^{coq}=q^2$. We then consider the decomposition of
$H^{coq}$ as a coideal of $H$. Let $c$ be the number of
non-isomorphic irreducible left coideals of $H$ of dimension $q$. If
$|G(H)|=p^2$ then $c=0$, otherwise Lemma \ref{lem3} (2) shows that
$cq^2\geq p^2q^2$, a contradiction. If $|G(H)|=p^2q$ then $c=0$ by a
similar argument. Hence, by Lemma \ref{lem5}, there are $2$ possible
decompositions of $H^{coq}$ as a coideal of $H$:
$$H^{coq}=k1\oplus\sum_iV_i\oplus\sum_jW_j,\mbox{\,or\,}H^{coq}=kG\oplus\sum_iV_i\oplus\sum_jW_j,$$
where $V_i$ is an irreducible left coideal of $H$ of dimension $p$,
$W_i$ is an irreducible left coideal of $H$ of dimension $p^2$ and
$G$ is a subgroup of $G(H)$ of order $q$. Counting dimensions on
both sides, we have $q^2=1+mp$ or $q^2=q+np$ for some positive
integers $m,n$. This contradicts the assumption that $p$ does not
divide $q-1$ and $q+1$.
\end{proof}
 As an immediate consequence of the discussions in this section, we
 have the following corollary.
\begin{cor}
If $H$ is simple as a Hopf algebra then $H$ is isomorphic to a
Radford's biproduct $R\# kG$, where $kG$ is the group algebra of
group $G$ of order $p^2$, $R$ is a semisimple Yetter-Drinfeld Hopf
algebra in ${}^{kG}_{kG}\mathcal{YD}$ of dimension $q^2$.
\end{cor}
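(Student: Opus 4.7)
The plan is to obtain the corollary directly from Theorem~\ref{thm1} by ruling out the semisolvable alternative. Theorem~\ref{thm1} gives the dichotomy that $H$ is either semisolvable or isomorphic to the biproduct described in the statement, so it is enough to show that a simple semisimple Hopf algebra of dimension $p^2q^2$ cannot be semisolvable.

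To this end, I would unwind the definition of semisolvability. Suppose $H$ is lower semisolvable, witnessed by a chain $k=H_{n+1}\subseteq H_n\subseteq\cdots\subseteq H_1=H$ of normal Hopf subalgebra inclusions with trivial successive quotients $H_i/H_iH_{i+1}^+$. Let $j$ be the smallest index for which $H_j\subsetneq H$. Since $H$ is simple, $H_j$ can only be $k$, and then the definition demands that the quotient $H_{j-1}/H_{j-1}H_j^+=H/0=H$ itself be a trivial Hopf algebra, i.e., $H\cong kG$ or $H\cong (kG)^*$ for some group $G$ of order $p^2q^2$. The upper semisolvable case reduces to this one by passing to $H^*$, since simplicity and triviality are both self-dual properties.

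Finally, Burnside's $p^aq^b$-theorem supplies the contradiction. Any group $G$ of order $p^2q^2$ is solvable, hence admits a proper non-trivial normal subgroup $N$. Then $kN$ is a proper non-trivial normal Hopf subalgebra of $kG$, and dually the Hopf subalgebra of $(kG)^*$ arising from the surjection $kG\twoheadrightarrow k(G/N)$ is proper, non-trivial, and normal. Either way, $H$ would fail to be simple, contradicting the hypothesis.

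The only delicate point is the chain-trimming step used to force $H$ itself to be trivial from the conjunction of semisolvability and simplicity; once that is in place, Burnside's theorem provides the final contradiction almost immediately.
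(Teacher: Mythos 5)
Your proposal is correct, and it reaches the corollary by a genuinely different route than the paper. The paper offers no standalone argument: the corollary is declared an ``immediate consequence of the discussions in this section,'' meaning that in every case of the analysis of $|G(H^*)|$ where semisolvability is established (Lemmas \ref{lem7}--\ref{lem11}), the proof in fact exhibits a proper normal Hopf subalgebra or a normal quotient of $H$, so a simple $H$ is forced into the biproduct branch of Lemma \ref{lem11}. You instead prove the general statement that a simple semisimple Hopf algebra of dimension $p^2q^2$ cannot be semisolvable: unwinding the definition of lower semisolvability, the first strict step of the chain is a normal Hopf subalgebra of $H$ itself, which simplicity forces to be $k$, whence the corresponding quotient condition makes $H$ trivial; Burnside (or, under the standing hypothesis $p^4<q$, even just Sylow, since $p^2<q$ forces a normal Sylow $q$-subgroup) then shows a group of order $p^2q^2$ is not simple, so neither $kG$ nor $(kG)^*$ is simple as a Hopf algebra --- the latter because Hopf subalgebras of a commutative Hopf algebra are automatically normal. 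The upper semisolvable case dualizes correctly since simplicity, triviality, and (by \cite[Corollary 3.3]{Montgomery}) semisolvability all pass to $H^*$. Your chain-trimming step, which you rightly flag as the delicate point, is sound: $H_j$ is normal in $H_{j-1}=H$ by the chain condition, and $H_j^+=0$ makes the relevant quotient equal to $H$. What your argument buys is independence from the section's case-by-case bookkeeping and a reusable fact (simple plus semisolvable implies trivial, in any dimension where the relevant groups are non-simple); what the paper's reading buys is brevity, since the normal subobjects are already on the table.
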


The following example was pointed out to the author by the anonymous
referee.

\begin{ex}
In fact, examples of nontrivial semisimple Hopf algebras of
dimension $p^2q^2$ which are Radford's biproducts in such a way, and
are simple as Hopf algebras do exists. A construction of such
examples as twisting deformations of certain groups appears in
\cite[Remark 4.6]{Galindo}.
\end{ex}

\section{Semisimple Hopf algebras of dimension $4q^2$}\label{sec4}
Let $q$ be a prime number, and $H$ a semisimple Hopf algebra of
dimension $4q^2$. In this section, we discuss the structure of $H$.
By Theorem \ref{thm1}, it suffices to consider the cases that
$q=3,5,7,11$ and $13$. By Nichols-Zoeller Theorem and Lemma
\ref{lem1}, the order of $G(H^*)$ is $2,4,q,q^2,2q,4q$,$2q^2$ or
$4q^2$. Moreover, if $|G(H^*)|=4q^2$ then $H$ is a dual group
algebra. The dimension of a simple $H$-module can only be $1,2,4$ or
$q$. Let $a,b,c$ be the number of non-isomorphic simple $H$-module
of dimension $2,4$ and $q$, respectively. Then we have
$4q^2=|G(H^*)|+4a+16b+cq^2$. In particular, if $c\neq 0$ then
$c=1,2$ or $3$. By \cite[Chapter 8]{Natale4}, if ${\rm dim}H=36$
then $H$ is upper semisolvable or lower semisolvable. Therefore, we
may assume that $q=5,7,11$ or $13$ in the followings.

\begin{lem}\label{lem12}
If $|G(H^*)|=2$ then $H$ is upper semisolvable.
\end{lem}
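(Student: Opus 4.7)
The plan is to identify a Hopf subalgebra of $H^*$ of dimension $2q^2$ and then invoke Lemma \ref{lem2} applied to $H^*$. The Hopf subalgebra will arise by dualizing the quotient Hopf algebra $\overline{H}$ furnished by Lemma \ref{lem6}.

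First I would pin down $\dim \overline{H}$. The class equation reads $4q^2 = 2 + 4a + 16b + cq^2$, equivalently $(4-c)q^2 = 2 + 4a + 16b$. Reducing modulo $4$ and using that $q$ is odd (so $q^2 \equiv 1 \pmod 4$) gives $-c \equiv 2 \pmod 4$, i.e.\ $c \equiv 2 \pmod 4$. Since $c \in \{0,1,2,3\}$ (as $cq^2 < 4q^2$), we must have $c = 2$, and hence $\dim \overline{H} = 2q^2$. One can also verify the exclusions directly: $c=0$ and $c=1$ fail modulo $4$, while $c=3$ would force $\dim\overline{H}=q^2$, which is coprime to $|G(H^*)|=2$, contradicting the divisibility conclusion of Lemma \ref{lem6}.

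Next I would dualize: the surjection $H \twoheadrightarrow \overline{H}$ induces a Hopf algebra inclusion $\overline{H}^* \hookrightarrow H^*$, giving a Hopf subalgebra of dimension $2q^2$ inside $H^*$, which itself has dimension $4q^2 = 2^2 q^2$ with $2 < q$ prime. Lemma \ref{lem2} applied to $H^*$ (with the roles of $p$ and $q$ played by $2$ and $q$) then yields that $H^*$ is lower semisolvable, which by \cite[Corollary 3.3]{Montgomery} is equivalent to $H$ being upper semisolvable.

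I do not anticipate any serious obstacle: the mod-$4$ computation pinning $c=2$ is elementary, and the rest is a direct invocation of earlier results. The only small point to keep track of is that the $\overline{H}$ produced by Lemma \ref{lem6} is a genuine Hopf quotient (so that $\overline{H}^*$ embeds in $H^*$ as a Hopf subalgebra rather than merely as a subcoalgebra), but this is built into the standard-subalgebra correspondence via \cite[Theorem 6]{Nichols}.
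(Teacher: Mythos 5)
There is a genuine gap, and it sits exactly where the real work of this lemma lies. Your argument rests on applying Lemma \ref{lem6} to produce a quotient Hopf algebra $\overline{H}$ of dimension $|G(H^*)|+4a+16b$, i.e.\ on the claim that the irreducible characters of degree $1$, $2$ and $4$ span a standard subalgebra of $R(H)$. But Lemma \ref{lem6} is stated and proved under the standing hypothesis $p^4<q$ of Section \ref{sec3}; its ``direct'' proof is precisely that a product of two characters of degree at most $p^2$ has degree at most $p^4<q$, so no degree-$q$ constituent can occur and the span is closed under multiplication. In the present setting $p=2$ and $q\in\{5,7,11,13\}$, so $p^4=16>q$ and a product $\chi_4\chi_4^*$ of degree $16$ can perfectly well contain an irreducible character of degree $q$. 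Indeed, in the candidate types the paper must eliminate --- e.g.\ $(1,2;4,3;5,2)$, where $\chi_4\chi_4^*=\varepsilon+(\text{degree }15)$ forces a degree-$5$ constituent --- such a character \emph{must} occur, so the degree-$\{1,2,4\}$ characters do not span a standard subalgebra and no quotient of dimension $2+4a+16b$ exists. Your mod-$4$ computation giving $c=2$ is correct arithmetic, but it does not rescue the argument: the object whose dimension you are computing need not exist. (The final dualization step, from a $2q^2$-dimensional quotient of $H$ to Lemma \ref{lem2} applied to $H^*$, is fine and is what the paper also does once such a quotient is in hand.)

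By contrast, the paper's proof splits on $a$. When $a\neq 0$ it works with the smaller standard subalgebra spanned by $G(H^*)\cup X_2$ alone (products of degree-$2$ characters have degree $4<q$, and $G[\chi_2]=G(H^*)$ since there are no degree-$3$ characters), obtains a quotient of dimension $2+4a$, and shows by divisibility that this must equal $2q^2$. When $a=0$ this subalgebra is trivial and gives nothing, so the paper lists the four algebra types compatible with $4q^2=2+16b+cq^2$ and rules out each one by explicit character-theoretic computations with $\chi_4\chi_4^*$ and $\chi_q\chi_4$. That case analysis is the substance of the lemma, and your proposal bypasses it by assuming exactly what those computations are needed to exclude.
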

\begin{proof}
We first note that $c\neq 0$, otherwise $4q^2=2+4a+16b$ will give
rise to a contradiction $2(q^2-a-4b)=1$. That is, $c=1,2$ or $3$.

We then consider the case that $a\neq 0$. Let $\chi_2\in X_2$. Since
$H$ does not have irreducible characters of degree $3$, we have
$G[\chi_2]=G(H^*)$. Then a similar argument as in Lemma \ref{lem7}
shows that $G(H^*)\cup X_2$ spans a standard subalgebra of $R(H)$.
Hence, $H$ has a quotient Hopf algebra of dimension $2+4a$, and
$2+4a$ divides $4q^2$. Since $q$ is odd, $2+4a$ can not be $q$ and
$q^2$. If $2+4a=4q^2$ then $c=0$, a contradiction. If $2+4a=4q$ then
$1=2(q-a)$, a contradiction. If $2+4a=2q$, then a direct check, for
$q=5,7,11,13$ and $c=1,2,3$, shows that $b$ is not a integer, a
contradiction. Hence, $2+4a=2q^2$ and $H$ has a quotient Hopf
algebra of dimension $2q^2$. Therefore, $H$ is upper semisolvable by
Lemma \ref{lem2}.

Finally, we consider the case that $a=0$. In this case,
$4q^2=2+16b+cq^2$. A direct check, for $q=5,7,11,13$ and $c=1,2,3$,
shows that above equation holds true only when $b=3,q=5,c=2$ or
$b=6,q=7,c=2$ or $b=15,q=11,c=2$ or $b=21,q=13,c=2$. That is, $H$ is
of type $(1,2;4,3;5,2)$, $(1,2;4,6;7,2)$, $(1,2;4,15;11,2)$ or
$(1,2;4,21;13,2)$ as an algebra. We shall prove that all these can
not happen.

Suppose on the contrary that $H$ is of type $(1,2;4,3;5,2)$ as an
algebra. Let $\chi_4\in X_4$ and $G(H^*)=\{\varepsilon,g\}$. Then
there must exist $\chi_5\in X_5$ such that $1\leq
m(\chi_5,\chi_4\chi_4^*)\leq 3$.

If $m(\chi_5,\chi_4\chi_4^*)=3$ then $m(\chi_4,\chi_5\chi_4)=3$.
This implies that $\chi_5\chi_4=3\chi_4+\chi_4'+\chi_4''$, where
$\chi_4\neq \chi_4',\chi_4\neq\chi_4''\in X_4$. In case $\chi_4'\neq
\chi_4''$, we have
$m(\chi_4',\chi_5\chi_4)=m(\chi_5,\chi_4'\chi_4^*)=1$. This implies
that $\chi_4'\chi_4^*=\chi_5+\varphi$, where $m(\chi_5,\varphi)=0$
and $deg\varphi=11$. Since $\chi_4'\neq\chi_4$, $\varepsilon$ can
not appear in $\varphi$. From the introduction in Section
\ref{sec2-2}, we know that the multiplicity of $g$ in $\varphi$ is
less than $2$. Hence, $\varphi=2\chi_5'+g$, where $\chi_5\neq
\chi_5'\in X_5$. From $m(g,\chi_4'\chi_4^*)=m(\chi_4',g\chi_4)=1$,
we have $g\chi_4=\chi_4'$. Hence,
$\chi_4'\chi_4^*=g\chi_4\chi_4^*=g+\chi_5+2\chi_5'$. This means that
$\chi_4\chi_4^*=\varepsilon+g\chi_5+2g\chi_5'=\varepsilon+\chi_5'+2\chi_5$.
In the second equality, we use the fact that $g\chi_5=\chi_5'$ which
is deduced from the fact that
$G[\chi_5]=G[\chi_5']=\{\varepsilon\}$. This contradicts the
assumption that $m(\chi_5,\chi_4\chi_4^*)=3$. In case
$\chi_4'=\chi_4''$, we have
$m(\chi_4',\chi_5\chi_4)=m(\chi_5,\chi_4'\chi_4^*)=2$. This implies
that $\chi_4'\chi_4^*=2\chi_5+\chi_5'+g$. A similar argument shows
that it is also a contradiction.

If $m(\chi_5,\chi_4\chi_4^*)=2$ then $m(\chi_4,\chi_5\chi_4)=2$.
This implies that $\chi_5\chi_4=2\chi_4+2\chi_4'+\chi_4''$, where
$\chi_4\neq \chi_4',\chi_4\neq\chi_4''\in X_4$. In case $\chi_4'=
\chi_4''$, we have
$m(\chi_4',\chi_5\chi_4)=m(\chi_5,\chi_4'\chi_4^*)=3$. This implies
that $\chi_4'\chi_4^*=3\chi_5+g$. Then
$1=m(g,\chi_4'\chi_4^*)=m(\chi_4',g\chi_4)$ implies that
$g\chi_4=\chi_4'$. Hence,
$\chi_4'\chi_4^*=g\chi_4\chi_4^*=g+3\chi_5$. This means that
$\chi_4\chi_4^*=\varepsilon+3g\chi_5=\varepsilon+3\chi_5'$. This
contradicts the assumption that $m(\chi_5,\chi_4\chi_4^*)=2$. In
case $\chi_4'\neq \chi_4''$, we have
$m(\chi_4',\chi_5\chi_4)=m(\chi_5,\chi_4'\chi_4^*)=2$. This implies
that $\chi_4'\chi_4^*=2\chi_5+\chi_5'+g$, where $\chi_5\neq
\chi_5'\in X_5$. A similar argument shows that
$\chi_4\chi_4^*=\varepsilon+2\chi_5'+\chi_5$. This also contradicts
the assumption.

If $m(\chi_5,\chi_4\chi_4^*)=1$ then
$\chi_4\chi_4^*=\varepsilon+\chi_5+2\chi_5'$, where
$\chi_5\neq\chi_5'\in X_5$. In this case,
$m(\chi_5',\chi_4\chi_4^*)=2$. From the discussion above, we know it
is impossible.

Suppose on the contrary that $H$ is of type $(1,2;4,6;7,2)$ as an
algebra. Let $\chi_4\in X_4$ and $G(H^*)=\{\varepsilon,g\}$. Then
there must exist $\chi_7\in X_7$ such that $1\leq
m(\chi_7,\chi_4\chi_4^*)\leq 2$.

If $m(\chi_7,\chi_4\chi_4^*)=1$ then $m(\chi_4,\chi_7\chi_4)=1$.
This implies that $\chi_7\chi_4=\chi_4+\varphi$, where
$m(\chi_4,\varphi)=0$ and $deg\varphi=24$. A direct check shows that
there is no irreducible character of degree $7$ in $\varphi$ and
there exists $\chi_4\neq\chi_4'\in X_4$ such that
$m(\chi_4',\chi_7\chi_4)=2$. Then $m(\chi_7,\chi_4'\chi_4^*)=2$
implies that $\chi_4'\chi_4^*=2\chi_7+\psi$, where $deg\psi=2$.
Since $\chi_4\neq \chi_4'$, $\varepsilon$ does not appear in the
decomposition of $\psi$. Hence, $\psi$ is irreducible or a sum of
$2$ copies of $g$. It is impossible.

If $m(\chi_7,\chi_4\chi_4^*)=2$ then $m(\chi_4,\chi_7\chi_4)=2$.
This implies that $\chi_7\chi_4=2\chi_4+\varphi$, where
$m(\chi_4,\varphi)=0$ and $deg\varphi=20$. From  the discussion
above, we know there does not exist $\chi_4\neq\chi_4'\in X_4$ such
that $m(\chi_4',\chi_7\chi_4)=2$. Then we have
$\chi_7\chi_4=2\chi_4+\sum_{i=1}^5\varphi_i$, where
$\{\chi_4,\varphi_1,\cdots,\varphi_5\}=X_4$. From
$m(\varphi_i,\chi_7\chi_4)=m(\chi_7,\varphi_i\chi_4^*)=1$, we have
$\varphi_i\chi_4^*=\chi_7+\psi_i$, where $deg\psi_i=9$ and
$m(\chi_7,\psi_i)=0$. It is clear that $m(g,\psi_i)=1$ for all $i$.
Then $m(g,\varphi_i\chi_4^*)=m(\varphi_i,g\chi_4)$ implies that
$\varphi_i=g\chi_4$. This means that $\varphi_1=\cdots=\varphi_5$, a
contradiction.

Suppose on the contrary that $H$ is of type $(1,2;4,15;11,2)$ as an
algebra. Let $\chi_4\in X_4$ and $G(H^*)=\{\varepsilon,g\}$. Then
$\chi_4\chi_4^*=\varepsilon+\chi_{11}+\varphi_1$, where
$\chi_{11}\in X_{11}$ and $\varphi_1\in X_4$. From
$m(\chi_{11},\chi_4\chi_4^*)=m(\chi_4,\chi_{11}\chi_4)=1$, we have
$\chi_{11}\chi_4=\chi_4+\varphi$, where $m(\chi_4,\varphi)=0$ and
$deg\varphi=40$. A direct check shows that there exists
$\chi_4\neq\chi_4'\in X_4$ such that
$m(\chi_4',\chi_{11}\chi_4)=m(\chi_{11},\chi_4'\chi_4^*)=1$. This
means that $\chi_4'\chi_4^*=\chi_{11}+\varphi_2+g$, where
$\varphi_2\in X_4$. Then $m(g,\chi_4'\chi_4^*)=m(\chi_4',g\chi_4)$
implies that $\chi_4'=g\chi_4$. Hence,
$\chi_4'\chi_4^*=g\chi_4\chi_4^*=\chi_{11}+\varphi_2+g$ implies that
$\chi_4\chi_4^*=g\chi_{11}+g\varphi_2+\varepsilon$. On the other
hand, $\chi_4\chi_4^*=\varepsilon+\chi_{11}+\varphi_1$. Hence,
$\chi_{11}=g\chi_{11}$. This means that $g$ appears in the
decomposition of $\chi_{11}\chi_{11}^*$, and hence
$G[\chi_{11}]=G(H^*)$. This contradicts the fact that the order of
$G[\chi_{11}]$ divides $121$ (See Lemma \ref{lem3}).

Suppose on the contrary that $H$ is of type $(1,2;4,21;13,2)$ as an
algebra. Let $\chi_4\in X_4$. Then the decomposition of
$\chi_4\chi_4^*$ gives a contradiction.
\end{proof}

\begin{lem}\label{lem13}
$|G(H^*)|\neq q$.
\end{lem}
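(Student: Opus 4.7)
The plan is to assume $|G(H^*)| = q$ and derive a contradiction by analyzing the quotient Hopf algebra from Lemma \ref{lem6}. The driving observation is that $q$ is odd and coprime to the squared degrees $4$ and $16$, which are the only squared degrees available other than $1$ and $q^2$.

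First I would show that $q \mid a$ and $q \mid b$ by an orbit argument. For any $\chi \in X_2 \cup X_4$, Lemma \ref{lem3}(1) gives that $|G[\chi]|$ divides $(\deg \chi)^2 \in \{4,16\}$, while Lagrange's theorem (applied to the subgroup $G[\chi] \leq G(H^*)$) gives that $|G[\chi]|$ divides $|G(H^*)| = q$; coprimality forces $G[\chi] = \{\varepsilon\}$. Since the stabilizer of $\chi$ under left multiplication by $G(H^*)$ coincides with $G[\chi]$ by the characterization recalled in Section \ref{sec2-2}, every orbit of this action on $X_2$ (respectively, $X_4$) has length $q$. Hence $q$ divides both $a$ and $b$, and I write $a = qa'$ and $b = qb'$.

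Next, Lemma \ref{lem6} furnishes a quotient Hopf algebra $\overline{H}$ of dimension
\[
\dim \overline{H} = q + 4a + 16b = q\bigl(1 + 4a' + 16b'\bigr),
\]
which divides $\dim H = 4q^2$. Therefore $1 + 4a' + 16b'$ divides $4q$; being odd, it must divide $q$, so it equals $1$ or $q$.

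If $1 + 4a' + 16b' = 1$, then $a = b = 0$, and the type equation $4q^2 = q + cq^2$ reduces to $(4-c)q = 1$, which is impossible for $q \geq 5$ and any $c \in \{0,1,2,3\}$. If instead $1 + 4a' + 16b' = q$, then $\dim \overline{H} = q^2$; by the classification of semisimple Hopf algebras of dimension $p^2$ cited in the introduction, $\overline{H}$ is trivial, so every irreducible representation of $\overline{H}$ is one-dimensional. But the irreducible characters of $\overline{H}$ include, by the construction in Lemma \ref{lem6}, the $a$ characters of degree $2$ and the $b$ characters of degree $4$ of $H$; so $a = b = 0$, whence $q = 1$, a contradiction. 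I do not anticipate a substantial obstacle: the argument is a divisibility reduction coupled with the triviality of dimension-$q^2$ semisimple Hopf algebras, both standard tools already in use in the paper.
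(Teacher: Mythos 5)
Your orbit argument is fine: for $|G(H^*)|=q$ the stabilizer $G[\chi]$ of any $\chi\in X_2\cup X_4$ is trivial (its order divides both $q$ and $(\deg\chi)^2\in\{4,16\}$), so $q\mid a$ and $q\mid b$. But the step where you invoke Lemma \ref{lem6} to produce a quotient Hopf algebra $\overline{H}$ of dimension $q+4a+16b$ is a genuine gap. Lemma \ref{lem6} lives in Section \ref{sec3}, under the standing hypothesis $p^4<q$; its proof is ``direct'' precisely because a product of two irreducible characters of degree at most $p^2$ has degree at most $p^4<q$ and therefore cannot contain a constituent of degree $q$, so the characters of degree $1,p,p^2$ are closed under multiplication. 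In Section \ref{sec4} we have $p=2$ and $q\in\{5,7,11,13\}$, so $16>q$ and this closure fails: a product $\chi_4\chi_4'$ of degree $16$ can perfectly well contain irreducible constituents of degree $q$. Indeed, in the one case your divisibility argument is really needed to kill, namely type $(1,11;4,22;11,1)$, it provably does: since $G[\chi_4]$ is trivial and $a=0$, the degree-$15$ remainder of $\chi_4^*\chi_4=\varepsilon+\cdots$ can only decompose as $4+11$, so $\chi_4^*\chi_4=\varepsilon+\chi_4'+\chi_{11}$. Hence the characters of degree $1$ and $4$ do \emph{not} span a standard subalgebra there, the quotient of dimension $11+16\cdot 22=363$ does not exist, and your conclusion that ``$1+4a'+16b'$ divides $4q$'' has no foundation. (Note also that the paper gets $a=0$ for free and more strongly than $q\mid a$: triviality of $G[\chi_2]$ forces $\chi_2\chi_2^*-\varepsilon$ to have degree $3$ with no group-like constituents, which is impossible since $H$ has no irreducibles of degree $3$.)

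What the paper does instead is exactly the work your shortcut tries to avoid: after forcing $a=0$, it enumerates the solutions of $4q^2=q+16b+cq^2$ for $q=5,7,11,13$, finds that only the type $(1,11;4,22;11,1)$ survives, and then eliminates it by a hands-on analysis of the fusion rules (showing that the two possible shapes of $\chi_{11}\chi_4^*$ are incompatible). If you want to salvage your approach, you would need to first prove, in this specific setting, that degree-$q$ characters cannot occur in products of degree-$4$ characters --- but that is false in the critical case, so the case analysis cannot be bypassed this way.
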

\begin{proof}
Suppose on the contrary that $|G(H^*)|= q$. If $a\neq 0$ we then
take $\chi_2\in X_2$. Since $G[\chi_2]$ is a subgroup of $G(H^*)$
and the order of $G[\chi_2]$ divides $4$ by Lemma \ref{lem3}(1),
$G[\chi_2]=\{\varepsilon\}$ is trivial. This is a contradiction
since $H$ does not have irreducible characters of degree $3$. Hence,
$a=0$ and $4q^2=q+16b+cq^2$. A direct check, for $q=5,7,11,13$ and
$c=0,1,2,3$, shows that above equation holds true only when
$b=22,q=11,c=1$. That is, $H$ is of type $(1,11;4,22;11,1)$ as an
algebra. We shall prove that it is impossible.

Suppose on the contrary that $H$ is of type $(1,11;4,22;11,1)$ as an
algebra. Let $\chi$ be the unique irreducible character of degree
$11$ and $g$ the generator of $G(H^*)$. Then $g\chi=\chi$ and hence
$G[\chi]=G(H^*)$. If $$\chi\chi^*=\chi^2=\sum_{i=1}^{11}g^i+10\chi$$
then $G(H^*)\cup X_{11}$ spans a standard subalgebra of $R(H)$.
Hence, $H$ has a quotient Hopf algebra of dimension $132$. By
Nichols-Zoeller Theorem, it is impossible.

Therefore, there exists $\chi_4\in X_4$ such that
$m(\chi_4,\chi^2)=n\geq1$. From
$m(\chi,\chi_4\chi)=m(\chi,\chi\chi_4^*)=n$, we have
$\chi\chi_4^*\stackrel{(1)}{=}n\chi+\varphi$, where
$deg\varphi=44-11n$ and $m(\chi,\varphi)=0$. On the other hand,
$\chi_4^*\chi_4=\varepsilon+\chi_4'+\chi$, where $\chi_4'\in X_4$.
From $m(\chi,\chi_4^*\chi_4)=m(\chi_4^*,\chi\chi_4^*)=1$, we have
$\chi\chi_4^*\stackrel{(2)}{=}\chi_4^*+\psi$, where $deg\psi=40$ and
$m(\chi_4^*,\psi)=0$. A direct check shows that $\chi$ does not
appear in the decomposition of $\psi$. Hence, $(1)$ and $(2)$ give
rise to a contradiction.
\end{proof}

\begin{lem}\label{lem14}
If $|G(H^*)|=q^2$ then $H$ is upper semisolvable.
\end{lem}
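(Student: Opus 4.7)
The plan is to adapt the argument of Lemma \ref{lem9} to the present setting where $p=2$ and $|G(H^*)|=q^2$ with $q\in\{5,7,11,13\}$. First I would pin down the algebra type of $H$, and then mimic the two-sided invariance argument of Lemma \ref{lem9} to exhibit $kG(H^*)$ as a normal Hopf subalgebra of $H^*$ with trivial quotient. For the type, the key observation is that for any $\chi_d\in X_d$ the order of $G[\chi_d]$ divides both $d^2$ (by Lemma \ref{lem3}(1)) and $|G(H^*)|=q^2$. For $d\in\{2,4\}$, since $q$ is odd, coprimality forces $G[\chi_d]=\{\varepsilon\}$. The abelian group $G(H^*)$ therefore acts freely by left multiplication on each of $X_2$ and $X_4$, so $q^2\mid a$ and $q^2\mid b$ whenever they are nonzero. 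Substituting into $(3-c)q^2=4a+16b$ with $c\in\{0,1,2,3\}$, the elementary bound $4a+16b\geq 4q^2>3q^2$ whenever $a>0$ or $b>0$ forces $a=b=0$ and $c=3$. Hence $H$ has algebra type $(1,q^2;q,3)$ and dually $H^*$ has coalgebra type $(1,q^2;q,3)$.

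Next, $G(H^*)$ acts on the three-element set $X_q$ by left multiplication with orbit lengths in $\{1,q,q^2\}$. Because $|X_q|=3<q$, every orbit has length one, so $G[\chi_q]=G(H^*)$ for every $\chi_q\in X_q$. Exactly as in Lemma \ref{lem9}, from $g\chi_q=\chi_q$ and $g^{-1}\chi_q^*=\chi_q^*$ one deduces $g\chi_q=\chi_qg=\chi_q$ for all $g\in G(H^*)$. Translating this to the coalgebra side, each of the three $q^2$-dimensional simple subcoalgebras $C_1,C_2,C_3$ of $H^*$ satisfies $gC_i=C_i=C_ig$ for all $g\in G(H^*)$.

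By \cite[Proposition 3.2.6]{Natale4}, $G(H^*)$ is then normal in the Hopf subalgebra $k[C_1,C_2,C_3]\subseteq H^*$. This Hopf subalgebra has dimension at least $3q^2$, and since the only divisor of $4q^2$ that is at least $3q^2$ is $4q^2$ itself, Nichols--Zoeller forces $k[C_1,C_2,C_3]=H^*$. Thus $kG(H^*)$ is normal in $H^*$ and the quotient $H^*/H^*(kG(H^*))^+$ has dimension $4$, which is trivial by \cite{Masuoka2}. Consequently $H^*$ is lower semisolvable, and $H$ is upper semisolvable. The main obstacle is the algebra-type determination: unlike Lemma \ref{lem9}, the hypothesis $p^4<q$ is unavailable here, so ruling out nonzero $a$ and $b$ must be done by the free action of $G(H^*)$ on $X_2$ and $X_4$ (coming from the coprimality of $d^2$ and $q^2$ for $d\in\{2,4\}$); once that is in hand, the rest is a direct translation of Lemma \ref{lem9}.
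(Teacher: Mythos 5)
Your proof is correct, and its overall architecture coincides with the paper's: pin down the algebra type as $(1,q^2;q,3)$, then transport the argument of Lemma \ref{lem9} (orbits of length $1$ on $X_q$, two-sided invariance of the simple subcoalgebras $C_i$, normality of $G(H^*)$ in $k[C_1,C_2,C_3]=H^*$, trivial quotient of dimension $4$). The one place where you genuinely diverge is the type determination. The paper shows $a=0$ by the mechanism of Lemma \ref{lem13}: since $|G[\chi_2]|$ divides both $4$ and $q^2$ it is trivial, so $\chi_2\chi_2^*-\varepsilon$ would have to be a sum of irreducibles of degree $\geq 2$ totalling degree $3$, which is impossible as $H$ has no irreducible characters of degree $3$; it then kills $b$ by the arithmetic check $16\nmid(3-c)q^2$. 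You instead observe that triviality of $G[\chi_d]$ for $d\in\{2,4\}$ means $G(H^*)$ acts freely on $X_2$ and $X_4$, so $q^2$ divides $a$ and $b$, and the dimension count $(3-c)q^2=4a+16b$ then forces $a=b=0$ at one stroke. Both routes are valid; yours is more uniform (it treats $a$ and $b$ by the same orbit-counting principle and avoids the case check on $c$), while the paper's reuses verbatim the argument already written out for Lemma \ref{lem13}. Your dimension count at the end ($k[C_1,C_2,C_3]$ has dimension $\geq 3q^2$, and the only divisor of $4q^2$ that large is $4q^2$) is also a slightly different, but equally correct, way of seeing $k[C_1,C_2,C_3]=H^*$ than the paper's direct count $q^2+3\cdot q^2=4q^2$.
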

\begin{proof}
A similar argument as in Lemma \ref{lem13} shows that $a=0$, and
hence $3q^2=16b+cq^2$. A direct check, for $c=0,1,2,3$, shows that
$H$ is of type $(1,q^2;q,3)$ as an algebra. The result then follows
from a similar argument as in Lemma \ref{lem9}.
\end{proof}

\begin{lem}\label{lem14}
If $|G(H^*)|=2q$ then $H$ is upper semisolvable.
\end{lem}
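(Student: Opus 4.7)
The plan is to follow the same template used in Lemma \ref{lem7}: produce a proper quotient Hopf algebra of $H$ of dimension $2q^2$, so that $H^*$ contains a Hopf subalgebra of dimension $pq^2$ with $p=2<q$, and then invoke Lemma \ref{lem2} applied to $H^*$ together with the self-duality of upper/lower semisolvability recorded in Section \ref{sec2-1}.

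The first step is to rule out $c=0$. Indeed, $c=0$ reduces the counting equation $4q^2=|G(H^*)|+4a+16b+cq^2$ to $2a+8b=q(2q-1)$, whose right side is odd (since $q$ is odd) while the left side is even; this is a contradiction. Hence $c\neq 0$, and the quotient Hopf algebra $\overline{H}$ of Lemma \ref{lem6} spanned by the irreducible characters of degrees $1,2,4$ has dimension strictly less than $4q^2$.

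Next I would pin down $\dim\overline{H}$. By Lemma \ref{lem6} it is a multiple of $|G(H^*)|=2q$ and a divisor of $4q^2$, so it lies in $\{2q,4q,2q^2\}$. Routine parity and divisibility checks eliminate the first two cases: $\dim\overline{H}=2q$ forces $a=b=0$ and hence $q(4-c)=2$, impossible for $q\geq 5$; while $\dim\overline{H}=4q$ forces $4a+16b=2q$, i.e.\ $a+4b=q/2$, impossible since $q$ is odd. Thus $\dim\overline{H}=2q^2$.

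This quotient dualizes to a Hopf subalgebra of $H^*$ of dimension $pq^2=2q^2$. Lemma \ref{lem2} applied to the semisimple Hopf algebra $H^*$ of dimension $p^2q^2$ with $p=2<q$ then shows that $H^*$ is lower semisolvable, equivalently that $H$ is upper semisolvable. I do not expect a genuine obstacle here: every step is forced by dimension and parity considerations, and the decisive conceptual ingredient (locating a quotient of dimension exactly $pq^2$) feeds directly into the hypothesis of Lemma \ref{lem2}.
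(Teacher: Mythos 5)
There is a genuine gap: your argument hinges on the quotient Hopf algebra $\overline{H}$ of Lemma \ref{lem6} ``spanned by the irreducible characters of degrees $1,2,4$,'' but Lemma \ref{lem6} is proved under the standing hypothesis of Section \ref{sec3} that $p^4<q$. That hypothesis is exactly what guarantees that a product of two characters of degree at most $p^2$ has degree $p^4<q$ and hence cannot contain a degree-$q$ constituent, so that the characters of degree $1,p,p^2$ are closed under multiplication. In the present lemma we are in Section \ref{sec4} with $p=2$ and $q\in\{5,7,11,13\}$, so $p^4=16\geq q$ and a product $\chi_4\chi_4^*$ of degree $16$ can perfectly well contain irreducible constituents of degree $q$. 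Indeed, the bulk of the proof of Lemma \ref{lem12} consists precisely of analyzing hypothetical types such as $(1,2;4,3;5,2)$ in which $m(\chi_q,\chi_4\chi_4^*)\geq 1$. So the set of characters of degree $1,2,4$ need not span a standard subalgebra, $\overline{H}$ need not exist, and the divisibility constraints you extract from it ($2q\mid\dim\overline{H}\mid 4q^2$) are unjustified. Your preliminary step ruling out $c=0$ and your final dualization into Lemma \ref{lem2} are both fine; the problem is only the existence of the quotient.

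The paper circumvents this by splitting on $a$. When $a\neq 0$ it uses only the characters of degree $1$ and $2$: their products have degree at most $4<q$, and since $H$ has no irreducible of degree $3$ one checks $G[\chi_2]\neq\{\varepsilon\}$, so $G(H^*)\cup X_2$ does span a standard subalgebra, giving a genuine quotient of dimension $2q+4a$; the divisibility argument then forces $2q+4a=2q^2$ and Lemma \ref{lem2} applies. When $a=0$ that quotient carries no information, and the paper instead verifies directly that $4q^2=2q+16b+cq^2$ has no solutions for $q=5,7,11,13$ and $c=0,1,2,3$. To repair your proof you would need to replace the appeal to Lemma \ref{lem6} by this (or a comparable) case analysis.
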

\begin{proof}
If $a\neq 0$ then $q^2$ does not divide $a$, otherwise $4a\geq
4q^2$, a contradiction. Then, by Lemma \ref{lem6}, we have that
$2q+4a$ divides $4q^2$. A direct check shows that $2q+4a$ can not be
$q^2$, $4q^2$ and $4q$. Hence, $2q+4a=2q^2$ and $H$ has a quotient
Hopf algebra of dimension $2q^2$. So, $H$ is upper semisolvable by
Lemma \ref{lem2}.

If $a=0$ then $4q^2=2q+16b+cq^2$. A direct check, for $q=5,7,11,13$
and $c=0,1,2,3$, shows that this can not happen.
\end{proof}

The following lemma is obvious.
\begin{lem}
If $|G(H^*)|=2q^2$ then $H$ is upper semisolvable.
\end{lem}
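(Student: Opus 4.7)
The plan is to reduce this immediately to Lemma \ref{lem2} applied to the dual Hopf algebra $H^*$. The starting point is that by \cite{Larson}, $H^*$ is again semisimple, and it has the same dimension $4q^2 = p^2q^2$ (with $p=2$) as $H$. So $H^*$ fits the hypothesis of Lemma \ref{lem2}.

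Next I observe that the hypothesis of the lemma, $|G(H^*)|=2q^2$, gives us for free a Hopf subalgebra of $H^*$ of the right size: the group algebra $kG(H^*)\subseteq H^*$ has dimension $2q^2=pq^2$. Unlike the earlier cases treated in Lemmas \ref{lem12}--\ref{lem14}, there is no need for Sylow arguments, character products, or analysis of the types $(1,n_1;2,a;4,b;q,c)$; the Hopf subalgebra is already present.

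Applying Lemma \ref{lem2} to $H^*$ with this Hopf subalgebra $kG(H^*)$ of dimension $pq^2$, I conclude that $H^*$ is lower semisolvable. Finally, by \cite[Corollary 3.3]{Montgomery}, recalled in Section \ref{sec2-1}, lower semisolvability of $H^*$ is equivalent to upper semisolvability of $H$, which is what we want.

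There is essentially no obstacle in this lemma; the entire content is recognizing that, under the hypothesis $|G(H^*)|=pq^2$, the index-$p$ Hopf subalgebra required by Lemma \ref{lem2} is produced tautologically from the group of group-likes. This is why the statement is noted as obvious in contrast to the analogous lemmas above, where the existence of a suitable Hopf subalgebra (or quotient) required a careful case-by-case examination of the possible algebra types of $H$.
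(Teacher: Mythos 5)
Your proposal is correct and is exactly the argument the paper intends (the paper states the lemma without proof as ``obvious''): the hypothesis hands you the Hopf subalgebra $kG(H^*)\subseteq H^*$ of dimension $2q^2=pq^2$ with $p=2<q$, Lemma \ref{lem2} applied to $H^*$ gives lower semisolvability, and dualizing via \cite[Corollary 3.3]{Montgomery} gives upper semisolvability of $H$. This matches the pattern used at the end of Lemmas \ref{lem12} and \ref{lem14}, where a quotient Hopf algebra of dimension $2q^2$ is produced and the same dual application of Lemma \ref{lem2} is invoked.
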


\begin{lem}\label{lem15}
If $|G(H^*)|=4$ or $4q$ then $H$ is either semisolvable or
isomorphic to a Radford's biproduct $R\# kG$, where $kG$ is the
group algebra of group $G$ of order $4$, $R$ is a semisimple
Yetter-Drinfeld Hopf algebra in ${}^{kG}_{kG}\mathcal{YD}$ of
dimension $q^2$.
\end{lem}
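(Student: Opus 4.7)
The strategy mirrors that of Lemma~\ref{lem11}: combine Lemma~\ref{lem2} (which converts a Hopf subalgebra of dimension $2q^2$ into semisolvability) with Theorem~\ref{prop1} (which produces a Radford biproduct once the grouplikes of $H$ and $H^*$ both contain subgroups of order $4$). The plan is to enumerate the possible values of $|G(H)|$, dispose of most of them by invoking the earlier lemmas of Section~\ref{sec4} applied to the dual $H^*$, and then appeal to Theorem~\ref{prop1} in the remaining cases.

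By the Nichols-Zoeller theorem $|G(H)|$ divides $4q^2$, and by Lemma~\ref{lem1} applied to $H^*$ we have $|G(H)|\geq 2$. If $|G(H)|=4q^2$ then $H$ is a group algebra, hence trivially semisolvable. For each intermediate value $|G(H)|\in\{2,q,2q,q^2,2q^2\}$, the lemma of this section handling the corresponding value of $|G(H^*)|$ (that is, the $|G(H^*)|=2,q,2q,q^2,2q^2$ cases, respectively) applies to the dual Hopf algebra $H^*$, which is also semisimple of dimension $4q^2$; the conclusion is either that $H^*$ is upper semisolvable or that a contradiction arises, and in either case $H$ is lower semisolvable.

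The remaining possibilities are $|G(H)|\in\{4,4q\}$. Combined with the hypothesis $|G(H^*)|\in\{4,4q\}$, the three pairings $(|G(H)|,|G(H^*)|)=(4,4)$, $(4,4q)$, and $(4q,4)$ all satisfy $\gcd(|G(H)|,|G(H^*)|)=4$, so Theorem~\ref{prop1} immediately yields the Radford biproduct $H\cong R\# kG$ with $|G|=4$.

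The main obstacle is the residual case $|G(H)|=|G(H^*)|=4q$, in which $\gcd=4q$ and Theorem~\ref{prop1} does not apply verbatim. I expect to treat it by re-examining the proof of Theorem~\ref{prop1}: that argument only uses that $G(H)$ and $G(H^*)$ each possess a subgroup of order $4$, which is clear in this case as well; granting this, the fact that ${\rm dim}\,H^{coq}=q^2$ is coprime to $2$ together with Lemma~\ref{lem5} forces $H^{coq}\cap kG=k1$ for any Sylow $2$-subgroup $G\subseteq G(H)$, whence the biproduct decomposition $H\cong H^{coq}\# kG$ persists. Thus the conclusion of the lemma extends to this final case as well.
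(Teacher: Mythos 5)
Your argument is correct, and it is essentially the argument the paper intends but does not write out: the paper's entire proof is the sentence ``This is a corollary of Lemma~\ref{lem2}'', which by itself cannot produce the biproduct alternative. Your version makes the missing steps explicit: the dual case analysis on $|G(H)|$ via Lemmas~\ref{lem12}--\ref{lem14} and their companions (each of which ultimately rests on Lemma~\ref{lem2}), the elimination of $|G(H)|=1$ via Lemma~\ref{lem1} applied to $H^*$, and the reduction of the residual cases $|G(H)|\in\{4,4q\}$ to Theorem~\ref{prop1}. Your one genuinely nontrivial observation is the treatment of the pairing $(|G(H)|,|G(H^*)|)=(4q,4q)$, where the stated hypothesis $\gcd(|G(H)|,|G(H^*)|)=p^2$ of Theorem~\ref{prop1} fails literally; you correctly note that its proof only uses that both groups contain a subgroup of order $p^2=4$ (so that no nontrivial $g$ of $2$-power order can lie in the $q^2$-dimensional coinvariant subalgebra, by Lemma~\ref{lem5}), and the paper itself tacitly relies on this same extension in the corollary following Theorem~\ref{thm1}. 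So your write-up is not a different route but a complete and correctly patched version of the paper's elliptical one.
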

\begin{proof}
This is a corollary of Lemma \ref{lem2}.
\end{proof}

Now we reach the main result in this section.
\begin{thm}\label{thm2}
Let $q$ be a prime number, and $H$ a semisimple Hopf algebra of
dimension $4q^2$. Then $H$ is either semisolvable or isomorphic to a
Radford's biproduct $R\# kG$, where $kG$ is the group algebra of
group $G$ of order $4$, $R$ is a semisimple Yetter-Drinfeld Hopf
algebra in ${}^{kG}_{kG}\mathcal{YD}$ of dimension $q^2$.
\end{thm}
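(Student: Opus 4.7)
The plan is a case analysis on the value of $q$ followed by a case analysis on $|G(H^*)|$, assembling the lemmas already established in this section together with the main Theorem \ref{thm1}. First I would reduce to the primes $q\in\{5,7,11,13\}$: if $q\geq 17$ then $2^4<q$ and Theorem \ref{thm1} with $p=2$ applies directly, while the case $\dim H=36$ (that is, $q=3$) has already been disposed of at the start of the section by appeal to \cite[Chapter 8]{Natale4}.

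For the four remaining primes, the Nichols--Zoeller theorem combined with Lemma \ref{lem1} restricts $|G(H^*)|$ to the list $\{2,4,q,q^2,2q,4q,2q^2,4q^2\}$. I would then read off the conclusion from the appropriate lemma in each case. The extreme value $|G(H^*)|=4q^2$ forces $H$ to be a dual group algebra, hence trivially semisolvable. Lemma \ref{lem13} rules out $|G(H^*)|=q$. Each of the four values $|G(H^*)|\in\{2,q^2,2q,2q^2\}$ yields upper semisolvability by one of the lemmas in Section \ref{sec4}, namely Lemma \ref{lem12} for the value $2$, and the lemmas dealing with $q^2$, $2q$, and $2q^2$ that are stated just before Lemma \ref{lem15}. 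Finally, Lemma \ref{lem15} treats $|G(H^*)|\in\{4,4q\}$ and produces exactly the required dichotomy between semisolvability and a Radford biproduct $R\#kG$ with $|G|=4$, via Theorem \ref{prop1}.

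Since these cases exhaust all admissible orders of $G(H^*)$, the theorem then follows by concatenation. The genuine work has already been performed in Lemmas \ref{lem12} and \ref{lem13}, where delicate character-product computations (decomposing $\chi_t\chi_t^*$ and exploiting the constraint $G[\chi_t]\subseteq G(H^*)$) are needed to rule out the exceptional algebra types $(1,2;4,3;5,2)$, $(1,2;4,6;7,2)$, $(1,2;4,15;11,2)$, $(1,2;4,21;13,2)$, and $(1,11;4,22;11,1)$ that arise for the specific small primes $q\in\{5,7,11,13\}$. Those are the real obstacles and have already been overcome; for the present theorem itself I would therefore expect only a short bookkeeping argument, with no further technical difficulties.
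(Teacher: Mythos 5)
Your proposal matches the paper's (implicit) proof exactly: Theorem \ref{thm2} is stated with no separate argument, being precisely the concatenation of Theorem \ref{thm1} with $p=2$ for $q>16$, the appeal to \cite[Chapter 8]{Natale4} for $q=3$, and the case-by-case lemmas on $|G(H^*)|$ for $q=5,7,11,13$ that you assemble. (Both you and the paper leave the case $q=2$, i.e.\ $\dim H=16$, unaddressed, though it is covered by Masuoka's $p^n$ theorem, which yields semisolvability there.)
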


\textbf{Acknowledgments:}\quad The author would like to thank the
referee for his/her valuable comments and suggestions, in particular
for providing a new proof of Lemma \ref{lem7}, which shortens the
original version. This work was partially supported by the FNS of
CHINA (10771183) and Docurate foundation of Ministry of Education of
CHINA (200811170001).

\end{document}